\newtheorem{theorem}{Theorem}[section]
\newtheorem{lemma}[theorem]{Lemma}
\newtheorem{proposition}[theorem]{Proposition}
\newtheorem{corollary}[theorem]{Corollary}
\newtheorem{definition}[theorem]{Definition}
\theoremstyle{definition}
\newtheorem{remark}[theorem]{Remark}
\newcommand\sspp{\mathop{\rm span}}
\newcommand\vn{\mathop{\rm VN}}
\newcommand\Bim{\mathop{\rm Bim}}
\newcommand\Sat{\mathop{\rm Sat}}
\newcommand\Ran{\mathop{\rm Ran}}
\def\ga{\alpha}
\newcommand\an{^{-1}}
\newcommand\cb{\mathop{\rm cb}}
\def\eps{\epsilon}
\def\gl{\lambda}
\def\gd{\delta}
\def\gG{\Gamma}
\def\gs{\sigma}
\def\Gs{\Sigma}
\newcommand{\cl}[1]{\mathcal{#1}}
\newcommand{\bb}[1]{\mathbb{#1}}
\newcommand{\du}[2]{\left\langle{#1},{#2} \right\rangle} 
\def\gd{\delta}
\def\ot{\otimes}
\newcommand{\nor}[1]{\left\Vert #1\right\Vert}    
\newcommand{\sca}[1]{\left(#1\right)} %
\newcommand\Ad{\mathop{\rm ad}}
\begin{document}

\title[ Bimodules over $\vn(G)$ and  the  Poisson boundary]{Bimodules over $\vn(G)$, harmonic operators and  the non-commutative Poisson boundary}

\author{M. Anoussis, A. Katavolos and  I. G. Todorov}

\begin{abstract}  Starting with a left ideal $J$ of $L^1(G)$ we consider its annihilator $J^{\perp}$ 
in $L^{\infty}(G)$ and the generated $\vn (G)$-bimodule in $\cl B(L^2(G))$, $\Bim(J^{\perp})$. 
We prove that 
$ \Bim(J^{\perp})=(\Ran J)^{\perp}$ when $G$ is weakly amenable discrete, compact or abelian, where $\Ran J$ is 
a suitable saturation  of $J$ in the trace class.
We define jointly harmonic functions and jointly harmonic operators and 
show that, for these classes of groups, the space of jointly harmonic operators is the 
$\vn(G)$-bimodule generated by the space of jointly harmonic functions. 
Using this, we give a proof of   the following  result of Izumi and Jaworski -- Neufang: 
 the non-commutative Poisson boundary is isomorphic to the crossed product  of the space 
of harmonic functions by $G$.  
\end{abstract}

\address{Department of Mathematics, University of the Aegean,
Samos 83 200, Greece}

\email{mano@aegean.gr}

\address{Department of Mathematics, University of Athens,
Athens 157 84, Greece}

\email{akatavol@math.uoa.gr}

\address{Mathematical Sciences Research Centre, Queen's University Belfast,
Belfast BT7 1NN, United Kingdom}

\email{i.todorov@qub.ac.uk}

\date{}

\maketitle

  \section{introduction}
Let $J$ be an ideal of the Fourier algebra $A(G)$ of a locally compact group $G$. 
There are two `canonical' ways to construct from $J$ an $L^{\infty}(G)$-bimodule of 
$\cl B(L^2(G))$. One way is to consider the annihilator $J^{\perp}$ of $J$ 
within $\vn (G)$ and then take the $L^{\infty}(G)$-bimodule
generated by $J^{\perp}$, denoted by  $\Bim( J^\perp)$. The other way is to
take the saturation of $J$ within the trace class on $L^2(G)$, which we call 
$\Sat J$, and then consider its annihilator. This gives 
a masa bimodule $(\Sat J)^{\perp}$ in $\cl B(L^2(G))$. In \cite{akt},
we proved that these two procedures  yield the same bimodule, that is, 
\[\Bim (J^\perp)=(\Sat J)^\perp. \tag{$\ast$}\]
    
In \cite{nr}, Neufang and Runde introduced the notion of $\gs$-harmonic
operators $\widetilde{\cl H}_\gs$ (where $\gs$ belongs to the space of completely bounded multipliers 
$ M^{\cb}A(G)$  of $A(G)$)
as an extension of the notion of $\gs$-harmonic 
functionals on $A(G)$ defined and studied by Chu and Lau in \cite{cl}. 
One of the main results of \cite{nr} is that, when $\gs$ is positive definite and normalised,
$\widetilde{\cl H}_\gs$ is the von Neumann algebra on $L^2(G)$ generated 
by the algebra $\cl D_G$ of multiplication operators together 
with the space ${\cl H}_\gs$ of harmonic functionals. 
In \cite{akt2}, for a subset  $\Gs\subseteq M^{\cb}A(G)$
we considered the set of jointly harmonic functionals $\cl H_{\Sigma}$
 (resp. operators $\widetilde{\cl H}_{\Sigma}$). 
Using the equality $(*)$,  
we showed that, for any $\Sigma\subseteq M^{\cb}A(G)$, we have $\widetilde{\cl H}_\Gs=\Bim({\cl H}_\Gs)$,
thus obtaining a generalization of the result of Neufang and Runde.

Another concept of harmonicity is introduced and studied by Jaworski and Neufang in \cite{jn}.   
Recall that a function $\phi\in L^\infty(G)$ is said to be harmonic with respect to  a probability measure
$\mu$  on $G$  \cite{who,furst} if it is a fixed point point of the map $P_\mu$ on $L^\infty(G)$ given by
$$ (P_\mu\phi)(s) = \int_G\phi(st)d\mu(t) \, .$$
The space of $\mu$-harmonic functions 
is denoted by $\cl H(\mu)$.
If $G$ is abelian, it follows from the Choquet-Deny theorem that, if the support of $\mu$ generates 
$G$ as a closed subgroup, then
$\cl H(\mu)$ consists of constants. 
In particular, it is a subalgebra of $L^{\infty}(G)$. 
Consider the natural isometric representation $\mu\to\Theta(\mu)$ of the measure algebra 
$M(G)$ on $\cl B(L^2(G))$ introduced by Ghahramani in \cite{gh}.  For $\mu \in M(G)$, 
the map $\Theta(\mu)$ extends the action 
$\phi\to P_\mu(\phi), \, \phi\in L^\infty(G)$.
For a probability measure $\mu$, the harmonic operators $T$ 
are defined in \cite{jn} by the relation $\Theta(\mu)T=T$. 
The collection of all $\mu$-harmonic operators is denoted by $\widetilde{\cl H} (\mu)$. 
The non-commutative Poisson boundary of $\mu$, denoted by $\widetilde{\cl H}_\mu$, is defined  to be the space 
$\widetilde{\cl H}(\mu)$,  equipped with  a certain von Neumann algebra structure \cite{izadv}.
The space $\cl H(\mu) $   
is a von Neumann subalgebra of $\widetilde{\cl H}(\mu)$ denoted by $\cl H_{\mu}$. 
Non-commutative Poisson boundaries were first considered by Izumi for discrete  groups in \cite{izu} 
where he showed that
$\widetilde{\cl H}_\mu$ is the crossed product of $\cl H_\mu$ by $G$ acting by left translations.
Jaworski and Neufang extended this in 
\cite{jn} to locally compact $G$, thus answering a question in \cite{izu}. 
This result was further generalised  in \cite{knr}  for locally compact quantum groups. 
  
When $G$ is abelian, the settings described in the previous two paragraphs 
are connected by the usual Fourier transform. 
(In particular, $\widetilde{\cl H} (\mu)$  is a subalgebra of $\cl B(L^2(G))$ in this case.) 
We discuss this relation in Section 4.

One may ask: What is a dual version of $(*)$? Can it be used to study  the space $\widetilde{\cl H} (\mu)$ of 
harmonic operators?
The present paper focuses on these questions. Instead of an ideal of $A(G)$, we start with a  left ideal $J$  of $L^1(G)$. 
We then   consider its annihilator $J^{\perp}$ in $L^{\infty}(G)$ and the $\vn (G)$-bimodule 
$\Bim(J^{\perp})$ generated by the collection of multiplication operators
$\{M_f: f\in J^\perp\}$ in $\cl B(L^2(G))$. 
We also construct a suitable saturation $\Ran J$ of $J$ within the trace class  $\cl T(G)$ on $L^2(G)$.
When $G$ is abelian, utilising Fourier transform and using $(*)$,
we show (Section \ref{s_ac}) that 
$$(\Ran J)^{\perp} = \Bim(J^{\perp}).$$
The following question then arises: Is this formula true for any locally compact group $G$? 
We show that equality does occur when $G$ is weakly amenable discrete 
(Section \ref{s_dc}) or compact (Section \ref{s_cc}).

Given a set $\Lambda\subseteq M(G)$ (not necessarily consisting of probability measures),
in Section \ref{s_jho} we define the space of {\em jointly $\Lambda$-harmonic} functions $\cl H(\Lambda)$  
to be the set of functions in $L^{\infty}(G)$ which are $\mu$-harmonic 
for every $\mu$ in $\Lambda$, and
we introduce in an analogous fashion the corresponding space of {\em jointly $\Lambda$-harmonic operators} 
$\widetilde{\cl H}(\Lambda)$. 
As a consequence of our previous results, we recover 
 $\widetilde{\cl H}(\Lambda)$, when the group is compact, weakly amenable discrete  or abelian:  
we show that it is the weak-* closed 
$\vn(G)$-bimodule generated by   ${\cl H}(\Lambda)$ in $\cl B(L^2(G))$. 
In the case where $\Lambda$ is a singleton consisting of a probability measure $\mu$, using  this we
give a proof of the above mentioned  result of Izumi and Jaworski -- Neufang: the 
non commutative Poisson boundary  $\widetilde{\cl H}_\mu$ is isomorphic
to the  crossed product of $\cl H_\mu$ by a canonical action of $G$.

\section{Preliminaries}\label{s_prel}

Let $G$ be  a second countable locally compact group equipped with left Haar measure. As usual, the 
corresponding Lebesgue spaces on $G$ are denoted by $L^p(G)$ for $1\le p \le\infty$.
We denote by $\lambda : G\rightarrow \cl B(L^2(G))$, $s\to \lambda_s$ the left
regular representation of the group $G$, {given by $(\gl_sf)(t)=f(s\an t)$}; 
here, $\mathcal{B}(L^2(G))$ denotes the algebra of bounded linear operators on  $L^2(G)$. 
We write
$(\cdot,\cdot)$ for the inner product
and we use $\du{\cdot}{\cdot}$ for the various Banach space dualities, 
in particular for the duality between $L^1(G)$ and $L^{\infty}(G)$. 
For $\phi\in L^{\infty}(G)$, let $M_{\phi}$ be the operator on $L^2(G)$
of multiplication by $\phi$.
We denote by $\cl D_G$ or $\cl D$ the algebra 
$ \{M_{\phi} : \phi\in L^{\infty}(G)\}$. This is a maximal
abelian selfadjoint algebra (for brevity, masa).

The predual $\cl T(G)$ of  $\mathcal{B}(L^2(G)) $ can be identified with the space of all functions  the form
 $h:G\times G\to\bb C$, defined marginally almost everywhere (see for example \cite{akt} ) and given by 
\begin{equation}\label{h}
 h(x,y)=\sum_{i=1}^{\infty} f_i(x )g_i(y),
\end{equation}
where
$\sum\limits_{i=1}^{\infty}\nor{f_i}_2^2<\infty$ and $\sum\limits_{i=1}^{\infty}\nor{g_i}_2^2<\infty$.
The norm on $\cl T(G)$ is given by 
\[
\|h\|_t=\inf\left\{\sum\limits_{i=1}^{\infty}\nor{f_i}_2\nor{g_i}_2\right\} 
\]
where the infimum is taken over all representations (\ref{h}) of $h$.  
The pairing between   $\mathcal{B}(L^2(G)) $ and  $\cl T(G)$ is given by
\[
 \du{T}{h}_t := \sum_{i=1}^{\infty} \sca{Tf_i,\bar g_i}.
\]

The \emph{group von Neumann algebra} of $G$ is the algebra
$$\vn(G) =\overline{\sspp\{\lambda_x : x\in G\}}^{w*},$$ acting on $L^2(G)$. Its predual 
can be identified with the \emph{Fourier algebra} $A(G)$ of $G$ \cite{eym}
which is the (commutative, regular, semi-simple) Banach algebra
consisting of
all complex functions $u$ on $G$ of the form
\begin{equation}\label{ag}
u(x) = (\lambda_x f,g),\ \  x\in G, \mbox{ where } f,g\in L^2(G).
\end{equation}
The pairing between   $\vn(G)$ and  $A(G)$
is given by $\du{\gl_x}{u}_A=u(x)$.
A function $\gs: G\rightarrow \bb{C}$ 
is called a \emph{multiplier} of $A(G)$ if $\sigma u \in A(G)$ for every $u\in A(G)$. 
If $\sigma$ is a multiplier of $A(G)$, the map $m_{\sigma} : A(G) \to A(G)$, given by 
$m_{\sigma}(u) = \sigma u$, is automatically bounded. 
A multiplier $\sigma$ of $A(G)$ is called \emph{completely bounded} \cite{deCH} 
if the dual 
$m_{\sigma}^* : \vn(G)\to \vn(G)$ of $m_{\sigma}$ is completely bounded. 
We write $M^{\cb}A(G)$ for the algebra of all completely bounded multipliers of $A(G)$. 
If $\sigma$ is in $M^{\cb}A(G)$ and $h\in\cl T(G)$, it was shown by 
J. E. Gilbert and M. Bo$\dot{{\rm z}}$ejko-G. Fendler 
in \cite{bf} that $N(\sigma) h$ is in $\cl T(G)$, where $N(\gs)(s,t) = \gs(ts^{-1})$. 

Let $J$ be a closed ideal of  $ A(G)$.   Consider the
norm closed masa bimodule 
$$\Sat J= \overline{\sspp(N(J)\cl T(G))}^{\|\cdot\|_t}$$
of $\cl T(G)$ generated by
$N(J)$.  Denote by $(\Sat J)^\perp$  the annihilator of $\Sat J$ in $\cl B(L^2(G))$.
Let $J^{\perp}$ be the annihilator of $J$ in $\vn(G)$,
and $\Bim(J^{\perp})$ be the weak-* closed masa bimodule generated by $J^\perp$ in $\cl B(L^2(G))$.

The following result was proved in \cite{akt}: 

\begin{theorem}\label{th_satlcg}
Let $J\subseteq A(G)$ be a closed ideal. Then
$(\Sat J)^\perp = \Bim (J^\perp)$.
\end{theorem}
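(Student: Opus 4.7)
The plan is to verify the two inclusions separately; the first is a direct computation while the second carries the real content of the theorem.

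\textbf{The easy inclusion} $\Bim(J^\perp) \subseteq (\Sat J)^\perp$. Since $(\Sat J)^\perp$ is a weak-* closed masa bimodule in $\cl B(L^2(G))$ (being the annihilator of a norm-closed masa bimodule in $\cl T(G)$), it suffices to show $J^\perp \subseteq (\Sat J)^\perp$. Let $q \colon \cl T(G) \to A(G)$ denote the quotient map characterised by $\du{T}{h}_t = \du{T}{q(h)}_A$ for $T \in \vn(G)$. An elementary change of variables using $N(\sigma)(s,t) = \sigma(ts^{-1})$ yields the identity
\[
q(N(\sigma) h) \;=\; \sigma \cdot q(h) \qquad (\sigma \in A(G),\ h \in \cl T(G)).
\]
Since $J$ is an ideal, $\sigma \cdot q(h) \in J$ whenever $\sigma \in J$, so $\du{T}{N(\sigma) h}_t = 0$ for every $T \in J^\perp$. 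Density and continuity of the pairing then give $T \in (\Sat J)^\perp$.

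\textbf{The reverse inclusion.} By bipolar duality this is equivalent to $\Bim(J^\perp)_\perp \subseteq \Sat J$ inside $\cl T(G)$. Using $\Bim(J^\perp) = \overline{\cl D J^\perp \cl D}^{w*}$, one checks that $h \in \Bim(J^\perp)_\perp$ if and only if every $\cl D$-dilate $(s,t) \mapsto \phi(s)\psi(t) h(s,t)$ lies in $q^{-1}(J)$. The task is to promote this functional condition on $h$ to the structural statement that $h$ lies in the norm closure of $\sspp(N(J) \cl T(G))$.

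The natural route is the change of coordinates $(s,t) \mapsto (s, ts^{-1})$, under which $N(\sigma)(s,t)$ becomes a function of the second variable alone, and $\Sat J$ is identified with the closed $\cl D$-bimodule of $\cl T(G)$ generated by functions of the second coordinate lying in $J$. The hypothesis on $h$ then reads that, for every $\cl D$-dilate of $h$, the second-variable restriction lies in $J$; integrating in the first coordinate against a resolution of the identity (via slice maps on $\cl T(G)$) should recover $h$ as a norm-convergent series of the form $\sum_n N(\sigma_n) h_n$ with $\sigma_n \in J$ and $h_n \in \cl T(G)$.

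The main obstacle is precisely this last step: manufacturing from the module-theoretic condition an explicit expansion in $N(J) \cl T(G)$. This requires the Bo\.zejko--Fendler--Gilbert description of $M^{\cb}A(G)$ acting on $\cl T(G)$ together with the operator-synthesis identification of weak-* closed masa bimodules in $\cl B(L^2(G))$ with $\omega$-closed subsets of $G \times G$; one expects both sides to correspond to the null ideal of $E_J = \{(s,t) \in G \times G : ts^{-1} \in Z(J)\}$, where $Z(J)$ denotes the hull of $J$, and the coincidence of these two descriptions is what ultimately forces the desired equality.
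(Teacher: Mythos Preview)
First, note that the present paper does not actually prove Theorem~\ref{th_satlcg}: it is quoted from the authors' earlier work~\cite{akt}, so there is no in-paper argument to compare against directly.

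Your easy inclusion $\Bim(J^\perp)\subseteq(\Sat J)^\perp$ is correct, and the identity $q(N(\sigma)h)=\sigma\cdot q(h)$ is exactly the mechanism behind it.

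For the reverse inclusion, however, your proposal has a genuine gap. The slice-map/resolution-of-identity idea you sketch in the penultimate paragraph is on the right track and is in the spirit of the argument in~\cite{akt}, but you do not carry it out. Instead, in the final paragraph you fall back on operator synthesis and the set $E_J=\{(s,t):ts^{-1}\in Z(J)\}$, and this cannot work for arbitrary closed ideals. A closed ideal $J\subseteq A(G)$ is determined by its hull $Z(J)$ only when $Z(J)$ is a set of spectral synthesis. Whenever synthesis fails (e.g.\ Schwartz's sphere in $\bb R^3$, or Malliavin's examples in any nondiscrete abelian $G$) there exist distinct closed ideals $J_1\neq J_2$ with $Z(J_1)=Z(J_2)$, hence $E_{J_1}=E_{J_2}$, yet $\Sat J_1\neq\Sat J_2$ and $\Bim(J_1^\perp)\neq\Bim(J_2^\perp)$. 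So neither side of the desired equality is a function of $E_J$ alone, and ``both sides correspond to the null ideal of $E_J$'' is false in general. Correspondingly, the identification of weak-* closed masa bimodules with $\omega$-closed subsets of $G\times G$ that you invoke is not a bijection: a general masa bimodule carries more information than its support, which is precisely the phenomenon of failure of operator synthesis.

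The proof in~\cite{akt} avoids hulls entirely and works directly with $J$; the key step is an explicit approximation showing that every $h\in\Bim(J^\perp)_\perp$ can be written in the closure of $N(J)\cl T(G)$, essentially the computation you gesture at but do not execute. You should abandon the synthesis route and push through the slice-map argument instead.
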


\section{Ideals of $L^1(G)$ and bimodules over $\vn(G)$}\label{s_s}

Throughout this section, we fix a locally compact group $G$. 
Let $\rho : G\rightarrow \cl B(L^2(G))$, $r\rightarrow \rho_r$, be the
right regular representation of $G$ on $L^2(G)$, given by 
$$(\rho_r f)(s) = \Delta(r)^{1/2} f(sr), \ \ \ f \in L^2(G), \ s,r\in G,$$ 
where $\Delta$ denotes the modular function of $G$.

Denote by $\Ad\rho_r$ the map on $\cl B(L^2(G))$ given by
$\Ad\rho_r(T)$ $= \rho_r T \rho_r^*$, $T\in \cl B(L^2(G))$.
Let $M(G)$ be the measure algebra of $G$, that is the 
(convolution) Banach algebra of all bounded, complex Borel measures on $G$.
We identify $L^1(G)$ with the (closed) ideal of $M(G)$ consisting of all measures, 
absolutely continuous with respect to Haar measure.
Define a representation 
$\Theta$ of the  algebra $M(G)$ on $\cl B(L^2(G))$ by
$$\langle\Theta(\mu) (T),h\rangle_t
= \int_{G} \langle\Ad\rho_r(T),h\rangle_t d\mu(r)$$
for every $h\in T(G)$.
This representation was introduced  and studied  by E. St\o{}rmer for abelian groups  
\cite{stor} and by F. Ghahramani \cite{gh} for  locally compact groups.
See \cite{nrs} for more references.

Since $\Ad\rho_r$ and $\Theta(\mu)$ are (bounded) weak-* continuous maps, 
they have (bounded) preduals $\theta_r$ and $\theta(\mu) : \cl T(G)\to \cl T(G)$.
Thus,
$$\theta(\mu)(h) = \int_G\theta_r(h) d\mu(r), \ \ \ h\in \cl T(G).$$
Note that, for $r\in G$, we have \cite[Lemma 4.1]{akt}
\begin{equation}\label{eq_htr}
\theta_r(h) = \Delta(r^{-1})h_{r^{-1}}, \ \ \ h\in \cl T(G).
\end{equation}
Here, $h_r(s,t) = h(sr,tr)$, $s,t,r\in G$.
Therefore,  if $f \in L^1(G)$ then
$$\theta(f)(h) = \int_G \Delta(r^{-1})h_{r^{-1}} f(r) dr, \ \ \ h\in \cl T(G).$$

Let $J\subseteq L^1(G)$ be a closed left ideal; we denote  by $J^{\perp}$ its annihilator in $L^{\infty}(G)$.
Set
$$\Ran J = \overline{\sspp{\{\theta(f)(h) : f\in J, h\in \cl T(G)\}}}^{\nor{\cdot}_t}\subseteq \cl T(G) .$$
Given a subspace $\cl U\subseteq L^{\infty}(G)$, we let
$$\Bim(\cl U) =  \overline{\sspp{\{A M_a B : A,B\in \vn(G), a\in \cl U\}}}^{w^*}\subseteq\cl B(L^2(G))\, ;$$
thus, $\Bim(\cl U)$ is the weak-* closed $\vn(G)$-bimodule generated by the
multiplication operators with symbols coming from $\cl U$.

\medskip
We denote by $(\Ran J)^{\perp}$ the annihilator of $\Ran J$  within $\cl B(L^2(G))$. 
We are interested in the relation between $(\Ran J)^{\perp}$ and $\Bim(J^{\perp})$.
\bigskip

\begin{lemma}\label{l}
The space  $(\Ran J)^{\perp}$ is the intersection of the kernels
of the maps $\{\Theta(f):f\in J\}$. We write this as
\[(\Ran J)^{\perp}=\ker\Theta(J).\]
Consequently, $(\Ran J)^{\perp}$ is a $\vn (G)$-bimodule.
\end{lemma}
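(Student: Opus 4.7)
The plan is to establish both assertions by a straightforward duality argument, using the fact that $\Theta(f) : \cl B(L^2(G)) \to \cl B(L^2(G))$ is precisely the weak-$*$ dual of the predual map $\theta(f) : \cl T(G) \to \cl T(G)$.

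For the first equality, I would observe that, by definition, $T \in (\Ran J)^{\perp}$ if and only if $\langle T, \theta(f)(h)\rangle_t = 0$ for every $f\in J$ and every $h\in \cl T(G)$ (density and linearity reduce the closed-span condition to elementary generators, and weak-$*$ continuity of $T$ handles the norm closure in $\cl T(G)$). The adjoint relation gives
\[
\langle T,\theta(f)(h)\rangle_t = \langle \Theta(f)(T), h\rangle_t,
\]
so vanishing for all $h$ is equivalent to $\Theta(f)(T)=0$. Taking the intersection over $f\in J$ yields $(\Ran J)^{\perp} = \bigcap_{f\in J}\ker \Theta(f) = \ker \Theta(J)$.

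For the bimodule property, the key observation is that the right regular representation commutes with $\vn(G)$: since $\vn(G) = \lambda(G)''$ and $\rho_r \in \vn(G)'$ for every $r\in G$, we have $A\rho_r = \rho_r A$ for all $A\in \vn(G)$. Consequently, for $A,B\in \vn(G)$ and $T\in \cl B(L^2(G))$,
\[
\Ad\rho_r(ATB) = \rho_r A T B \rho_r^{*} = A\,\rho_r T\rho_r^{*}\,B = A\,\Ad\rho_r(T)\,B.
\]
Integrating this identity against $\mu\in M(G)$ against any $h\in \cl T(G)$ (using the definition of $\Theta(\mu)$ via the predual pairing) gives $\Theta(\mu)(ATB) = A\,\Theta(\mu)(T)\,B$. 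Applied with $\mu = f\in J$, this shows that if $\Theta(f)(T)=0$ then $\Theta(f)(ATB)=0$, so $\ker \Theta(J)$ is stable under the left and right actions of $\vn(G)$ and is weak-$*$ closed as an intersection of kernels of weak-$*$ continuous maps; hence it is a weak-$*$ closed $\vn(G)$-bimodule.

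The only mildly delicate point is justifying the passage from the pointwise identity $\Ad\rho_r(ATB) = A\Ad\rho_r(T)B$ to the integrated identity for $\Theta(\mu)$; this is handled by pairing with an arbitrary $h\in \cl T(G)$, transferring $A$ and $B$ to the predual side, and invoking the definition of $\Theta(\mu)$ as a weak-$*$ integral of the maps $\Ad\rho_r$. Once this is done, both assertions of the lemma follow with no further work.
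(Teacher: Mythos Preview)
Your proof is correct and follows the same approach as the paper's: the paper simply asserts that the equality ``follows directly from the definition'' and that $\Theta(f)$ is a $\vn(G)$-bimodule map, while you have supplied the (routine) duality and commutation details behind these assertions. There is nothing to add.
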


\begin{proof}
Since $\Theta(f)$ is a $\vn (G)$-bimodule map for every $f\in J$, the space 
$\ker\Theta(J)$ is a $\vn (G)$-bimodule.
The equality $(\Ran J)^{\perp}=\ker\Theta(J)$ follows directly from the definition.
\end{proof}

\begin{remark}\label{rem28} 
Let $s,t\in G$, $f\in L^1(G)$ and $a\in L^\infty(G)$. Then 
\[
\Theta(f)(\gl_s^*M_a\gl_t)  = \gl_s^*\Theta(f)(M_a)\gl_t =  \gl_s^*\left(\int_G \rho_rM_a\rho_r^*f(r)dr\right)\gl_t 
= \gl_s^*M_g\gl_t,
\] where
\[
g(x) = \int_G a(xr)f(r)dr = \int_G f(x\an z) a(z)dz = \du{a}{\gl_xf}, \ \ x\in G.
\]
\end{remark}

\begin{lemma}\label{easy}
Let $s,t\in G$ and $a\in L^\infty(G)$. Then 
\[ \gl_s^*M_a\gl_t\in (\Ran J)^{\perp} \;\iff\; a\in J^\bot.\]
\end{lemma}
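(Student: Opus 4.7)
My plan is to reduce both directions to computing the function produced by applying $\Theta(f)$ to the operator $\lambda_s^*M_a\lambda_t$, using the two tools just stated. By Lemma \ref{l}, $\lambda_s^*M_a\lambda_t \in (\Ran J)^\perp$ is equivalent to $\Theta(f)(\lambda_s^*M_a\lambda_t) = 0$ for every $f\in J$. By Remark \ref{rem28}, this expression equals $\lambda_s^*M_{g_f}\lambda_t$, where
\[
 g_f(x) = \du{a}{\gl_x f}, \quad x\in G.
\]
Since $\lambda_s,\lambda_t$ are unitaries, vanishing of this operator is equivalent to $M_{g_f}=0$, i.e.\ $g_f=0$ almost everywhere. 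So I just need to show, for a fixed $a\in L^\infty(G)$:
\[
 (\forall f\in J:\; g_f=0 \text{ a.e.}) \;\iff\; a\in J^\perp.
\]

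For the implication $a\in J^\perp \Rightarrow g_f\equiv 0$, the key input is that a closed left ideal $J$ of $L^1(G)$ is invariant under left translations. This follows from a standard approximate-identity argument: picking a bounded approximate identity $(u_\alpha)$ of $L^1(G)$, one has $(\gl_x u_\alpha)*f = \gl_x(u_\alpha*f) \to \gl_x f$ in $L^1$-norm, while $(\gl_x u_\alpha)*f \in L^1(G)*J\subseteq J$, so $\gl_x f\in J$ by closedness. Consequently, if $a\in J^\perp$ then $g_f(x)=\du{a}{\gl_x f}=0$ for every $x$, in particular $g_f=0$ a.e.

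For the converse, I will use that $g_f$ is in fact \emph{continuous} on $G$: the left translation action of $G$ on $L^1(G)$ is strongly continuous, so $x\mapsto \gl_x f$ is continuous into $L^1(G)$, and then pairing with $a\in L^1(G)^*$ produces a continuous function. Thus $g_f=0$ a.e.\ forces $g_f\equiv 0$; evaluating at $x=e$ gives $\du{a}{f}=0$ for every $f\in J$, i.e.\ $a\in J^\perp$.

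The main obstacle is not really conceptual — the computation is essentially forced on us by Remark \ref{rem28} — but two ``standard but nonzero'' technical facts need to be assembled cleanly: the left-translation invariance of left ideals of $L^1(G)$ (requiring an approximate identity), and the continuity of the convolution-type function $g_f$, which is what lets us convert the a.e.\ identity $M_{g_f}=0$ into the pointwise identity $g_f(e)=0$.
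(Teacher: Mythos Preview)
Your proof is correct and follows the same overall strategy as the paper: use Lemma~\ref{l} and Remark~\ref{rem28} to reduce the question to whether the function $g_f(x)=\du{a}{\gl_x f}$ vanishes (almost everywhere) for every $f\in J$, and then handle the two implications separately. The forward direction ($a\in J^\perp\Rightarrow g_f\equiv 0$) is identical to the paper's, relying on left-translation invariance of the closed left ideal $J$.

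The one genuine difference is in the converse. The paper does not invoke continuity of $g_f$; instead, from $g_f=0$ a.e.\ it integrates against an arbitrary $g\in L^1(G)$ to obtain $\du{a}{g*f}=0$, and then passes to the limit along an approximate identity $(g_i)$ to deduce $\du{a}{f}=0$. Your route---observing that $x\mapsto \gl_x f$ is norm-continuous in $L^1(G)$, so $g_f$ is continuous and hence $g_f(e)=0$---is shorter and more direct, and avoids a second use of an approximate identity. The paper's argument, on the other hand, stays entirely within the measurable/integral framework and does not need to appeal to strong continuity of translation. Both are standard and equally valid.
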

\begin{proof} 
Since $(\Ran J)^{\perp}$ is a $\vn (G)$-bimodule, it suffices to show that $a\in J^\bot$ if and only if
$M_a\in (\Ran J)^{\perp}$.

Suppose $a\in J^\perp$ and $f \in J$. By Remark \ref{rem28}, 
\begin{align*}
\Theta(f)(M_a) = M_g,  \quad\text{where} \quad
g(x) = \du{a}{\gl_xf}, \ x\in G.
\end{align*}
Since $f\in J$ and $J$ is a closed left ideal, 
$\gl_xf \in J$ \cite[2.43]{follharm}, and so $g$ vanishes almost everywhere.
Thus, $\Theta(f)(M_a)=0$ for all $f\in J$ and so  $M_a\in (\Ran J)^\perp$.

Suppose, conversely, that $M_a\in (\Ran J)^{\perp}$. 
Then for every $f\in J$ we have $\Theta(f)(M_a)=0$ and so, by Remark \ref{rem28},  
\begin{align*}
 \du{a}{\gl_xf}=0 \quad \text{for almost all }\; x.
\end{align*}
Thus, for all $g \in L^1(G)$, we have 
$$\int_G g(x)\du{a}{\gl_xf}dx=0.\vspace{-0.5ex}$$ Therefore
\begin{align*}
\du {a}{( g * f)} &= \int_G ( g * f)(y)a(y)dy 
= \int_G \left(\int_G g(x) f(x^{-1}y)dx\right)a(y)dy\\
& =\int_G g(x)\left(\int_G f(x^{-1}y)a(y)dy\right)dx=\int_G g(x)\du{a}{\gl_xf}dx=0 \,.
\end{align*}
Let $(g_i)$ be an approximate unit for $L^1(G)$. 
Then 
$$\du{a}{f}=\lim \du{a}{g_i*f}=0,$$
and hence $a\in J^\bot$. 
\end{proof}

\begin{proposition} \label{propeasy}
For every left ideal $J\subseteq L^1(G)$, we have 
\begin{equation}\label{eq_inclu}
\Bim(J^{\perp})\subseteq (\Ran J)^{\perp}.
\end{equation}
\end{proposition}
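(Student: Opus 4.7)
The plan is to observe that this inclusion is essentially immediate from the two preceding lemmas, so my strategy is simply to assemble them correctly.

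First I would note, by Lemma \ref{easy} applied with $s=t=e$, that for every $a\in J^{\perp}$ the multiplication operator $M_a$ lies in $(\Ran J)^{\perp}$. This gives the initial inclusion $\{M_a : a\in J^{\perp}\}\subseteq (\Ran J)^{\perp}$.

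Next I would invoke Lemma \ref{l}, which tells us that $(\Ran J)^{\perp}$ is a $\vn(G)$-bimodule. Consequently, for any $A,B\in\vn(G)$ and $a\in J^{\perp}$, the operator $AM_aB$ lies in $(\Ran J)^{\perp}$, so the linear span of such operators is contained in $(\Ran J)^{\perp}$.

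Finally, since $(\Ran J)^{\perp}$ is the annihilator of a subset of the predual $\cl T(G)$, it is automatically weak-* closed in $\cl B(L^2(G))$. Taking the weak-* closure of the above span therefore stays inside $(\Ran J)^{\perp}$, and since the weak-* closure is by definition $\Bim(J^{\perp})$, we obtain the desired inclusion (\ref{eq_inclu}).

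There is really no obstacle here: the content of the proposition has already been absorbed into Lemmas \ref{l} and \ref{easy}; what remains is the bookkeeping of combining ``$M_a\in(\Ran J)^{\perp}$'', ``$(\Ran J)^{\perp}$ is a bimodule'', and ``$(\Ran J)^{\perp}$ is weak-* closed'' to match the definition of $\Bim(J^{\perp})$. The genuinely interesting direction will be the reverse inclusion, which the introduction indicates requires further hypotheses on $G$ (weak amenability, compactness, or abelianness) and is the subject of the later sections.
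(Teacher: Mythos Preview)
Your proof is correct and follows essentially the same approach as the paper: both combine Lemma~\ref{easy} (to place the generators $M_a$, $a\in J^\perp$, inside $(\Ran J)^\perp$) with Lemma~\ref{l} and the weak-* closedness of $(\Ran J)^\perp$ to absorb the $\vn(G)$-bimodule closure. The paper phrases the last step via the weak-* continuity of each $\Theta(f)$, while you invoke the bimodule property from Lemma~\ref{l} directly, but the content is identical.
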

\begin{proof} 
Since the maps $\Theta(f)$ are weak-* continuous, it suffices, by Lemma \ref{l}, 
to show that if $a\in J^\bot$ and $s,t\in G$, 
then 
$\Theta(f)(\gl_s^*M_a\gl_t)=0$ for all $f\in J$. 
But this follows from Lemma \ref{easy}. 
\end{proof}

In the subsequent sections, we will show that equality holds in (\ref{eq_inclu}) 
when $G$ is weakly amenable discrete, compact or abelian.  
We do  not know whether equality holds in (\ref{eq_inclu}) for a general  locally compact group $G$;
in the next lemma, we establish a useful restricted version, which 
should be compared to \cite[Lemma 4.6]{akt}. 
We identify the annihilator $J^{\perp}$ of an ideal $J\subseteq L^1(G)$ with its image in the
masa $\cl D = \cl D_G$.

\begin{proposition}\label{reconstruct}
For every  left ideal $J\subseteq L^1(G)$, 
$$\Bim(J^{\perp})\cap\cl D = (\Ran J)^{\perp}\cap\cl D= J^{\perp}.$$
\end{proposition}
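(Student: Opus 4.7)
The plan is to establish the chain of inclusions $J^{\perp}\subseteq \Bim(J^{\perp})\cap\cl D \subseteq (\Ran J)^{\perp}\cap\cl D \subseteq J^{\perp}$, so that the three spaces must coincide. Two of the three inclusions are essentially immediate from what has already been set up; the substantive content has been packaged into Lemma~\ref{easy}.

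First I would observe the trivial inclusion $J^{\perp}\subseteq \Bim(J^{\perp})\cap\cl D$: since $\vn(G)$ contains the identity, any $M_a$ with $a\in J^{\perp}$ lies in $\Bim(J^{\perp})$, and it lies in $\cl D$ by construction. Next, the inclusion $\Bim(J^{\perp})\cap\cl D \subseteq (\Ran J)^{\perp}\cap\cl D$ is a direct consequence of Proposition~\ref{propeasy}, intersected with $\cl D$.

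The remaining (and only nontrivial) inclusion is $(\Ran J)^{\perp}\cap\cl D \subseteq J^{\perp}$. Here I would simply invoke Lemma~\ref{easy} with $s=t=e$, so that $\gl_s^*M_a\gl_t=M_a$: the lemma then reads precisely $M_a\in (\Ran J)^{\perp}\iff a\in J^{\perp}$, which gives the inclusion.

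There is no real obstacle in this proof; the work has been front-loaded into Lemma~\ref{easy}, whose converse direction (using an approximate unit of $L^1(G)$ to pass from $\du{a}{\gl_xf}=0$ a.e.\ to $\du{a}{f}=0$) is the nontrivial step. Once that lemma is available, Proposition~\ref{reconstruct} is a formal consequence, and the chain of inclusions above closes up to an equality of three sets.
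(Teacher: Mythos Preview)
Your proposal is correct and follows essentially the same approach as the paper: the paper's proof establishes the identical chain of inclusions, citing Proposition~\ref{propeasy} for the middle step and Lemma~\ref{easy} for the last. Your additional commentary on where the real work lies (the approximate-unit argument inside Lemma~\ref{easy}) is accurate but goes slightly beyond what the paper itself says at this point.
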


\begin{proof} 
Trivially, 
$J^{\perp}\subseteq \Bim(J^{\perp})\cap\cl D$, while, by Proposition \ref{propeasy},
$\Bim(J^{\perp})\cap\cl D\subseteq (\Ran J)^{\perp}\cap\cl D$.
It remains to show that if $M_a\in (\Ran J)^{\perp}\cap\cl D$, then 
$a\in J^\perp$. But this follows from Lemma \ref{easy}.
\end{proof}

\section{the abelian case}\label{s_ac}
 
In \cite{akt2}, we used Theorem 2.1 to investigate the relation between 
$\gs$-harmonic functionals (where $\sigma$ is a multiplier of the Fourier algebra)
and $\gs$-harmonic operators.

In this section we assume that $G$ is a second countable 
locally compact abelian group and 
we obtain the equality 
$$\Bim(J^{\perp})=(\Ran J)^{\perp}$$ for an ideal $J\subseteq L^1(G)$. 

For this, we use Theorem 2.1 for the dual group $\Gamma$. 
To see the connection, let $\mu $ be a probability measure on $G$ and let $\sigma$ be the Fourier 
transform of $\mu$, that is, $\sigma = \hat{\mu}$, where 
$\hat{\mu}(x) = \int_G \overline{x(r)} d\mu(r), \ x\in \Gamma.$  
As $L^1(G)$ is a convolution ideal in $M(G)$ and $A(\Gamma) = \{\hat{f} : f\in L^1(G)\}$, the function 
$\sigma$ is a multiplier of $A(\Gamma)$ which,  
since $\vn(\gG)$ is an abelian C*-algebra, is completely bounded (see, for example, \cite[Prop. 2.2.6]{er}). 
It is not hard to see that, in this case,
the space of $\mu$-harmonic functions on $G$ is identified with the space of 
$\check\sigma$-harmonic functionals on $A(\Gamma)$ (here $\check\sigma(t)=\gs(t\an)$),
via the dual of the Fourier transform. In \cite{akt2}, we used Theorem 2.1 to investigate the relation between 
$\gs$-harmonic functionals (where $\sigma$ is a multiplier of the Fourier algebra)
and $\gs$-harmonic operators.

In this section,  we consider ideals  both of $A(\Gamma)$ and of $L^1(G)$. To improve clarity, 
if $I$ is an ideal of $A(\Gamma)$,
we will denote by  $\Bim_{\cl D_{\Gamma}}(I^{\perp})$  the $D_{\Gamma}$-bimodule of $\cl B(L^2(\Gamma))$ 
generated by the annihilator $I^{\perp}$ of $I$ in $\vn(\Gamma)$, while, if $I$ is an ideal of $L^1(G)$ 
we will denote by   $\Bim_{\vn (G)}(I^{\perp})$  the $\vn(G)$-bimodule  of  $\cl B(L^2(G))$ generated 
by the multiplication operators with symbols in the annihilator  $I^{\perp}$ of $I$  in $L^{\infty}(G)$.

For  a closed ideal $J\subseteq L^1(G)$, we wish to prove the equality 
\begin{equation}
(\Ran J)^\bot = \Bim\mbox{}_{\vn(G)}(J^\bot)  \, . \label{eq}
\end{equation} 
Let $F:L^2(G)\to L^2(\Gamma)$ be the unitary operator such that 
$F(f)= \hat f$, $f\in L^1(G)\cap L^2(G)$, and 
\[\Phi: \cl B(L^2(G)) \to\cl B( L^2(\gG)), \ \ \  \Phi(T)= FTF^{-1}.\]
It is clear that
$\Phi(\cl D_{G}) =\vn(\gG)$ and $\Phi(\vn(G))=\cl D_{\gG}$, and it is readily verified that 
\begin{align*}
\Phi\left(\Bim\mbox{}_{\vn(G)}(J^\bot)\right) = \Bim\mbox{}_{\cl D_\gG}\left(\Phi(J^\bot)\right)
\text{ and }
\Phi\left( (\Ran J)^\bot\right) = \Psi \left(\Ran J\right)^\bot,
\end{align*}
where $\Psi: \cl T(G) \to \cl T(\gG)$ denotes the predual of the map $\Phi\an$. 
Hence, (\ref{eq}) is equivalent to 
\begin{align}\label{psi2}
 (\Psi (\Ran J))^\bot =\Bim{}_{\cl D_{\gG}}(\Phi(J^\bot)),
\end{align} 
after identifying $J^\perp$ with its image in $\cl D_G$.

We will need the following lemma. 

\begin{lemma}\label{l_htg} 
Let $h\in\cl T(G)$ and $f\in L^1(G)$. Then
\[\Psi(\theta(f)(h)) =N(\phi(\hat f))\Psi(h),\]
where  $\phi$ denotes the map $\phi(u)(x)=u(x\an), \; x\in\gG$.
\end{lemma}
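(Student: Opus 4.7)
Both sides of the asserted equality depend linearly and norm-continuously on $h\in\cl T(G)$: the left because $\theta(f)$ and $\Psi$ are bounded, the right because $\phi(\hat f)\in A(\gG)\subseteq M^{\cb}A(\gG)$ (using that $\gG$ is abelian, so every multiplier of $A(\gG)$ is completely bounded, and $A(\gG)$ is closed under inversion of the argument), hence $k\mapsto N(\phi(\hat f))k$ is bounded on $\cl T(\gG)$ by Bo\.zejko--Fendler/Gilbert. It therefore suffices to verify the identity on rank-one tensors $h=u\otimes v$ with $u,v\in L^2(G)$, and extend by linearity and continuity.

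The first step is to describe $\Psi$ on such a tensor. Unwinding the definition, for every $S\in\cl B(L^2(\gG))$ one has
\[
\du{\Phi\an(S)}{u\otimes v}_t = (F\an SFu,\bar v) = (SFu,F\bar v),
\]
and for abelian $G$ the identity $F\bar v(\chi)=\overline{Fv(\chi\an)}$ gives $F\bar v=\overline{(Fv)^{\vee}}$. Comparing with the tracial pairing on $\cl T(\gG)$, this identifies $\Psi(u\otimes v)$ with the kernel $(\chi_1,\chi_2)\mapsto Fu(\chi_1)\,(Fv)^{\vee}(\chi_2)$; i.e., $\Psi$ acts by $F$ on the first factor and by $F$ followed by inversion on the second. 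This asymmetry is precisely what will account for the appearance of $\phi$ later.

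Next, since $\Delta\equiv 1$, the identity $(\ref{eq_htr})$ gives $\theta_r(u\otimes v)=(u\otimes v)_{r\an}=(\rho_{r\an}u)\otimes(\rho_{r\an}v)$. The right regular representation is diagonalised by $F$, with $F\rho_{r\an}w(\chi)=\chi(r\an)Fw(\chi)$. Applying the description of $\Psi$ above and integrating against $f(r)\,dr$, one obtains, pointwise on $\gG\times\gG$,
\[
\Psi(\theta(f)(u\otimes v))(\chi_1,\chi_2) = Fu(\chi_1)\,(Fv)^{\vee}(\chi_2)\int_G f(r)\chi_1(r\an)\chi_2(r)\,dr,
\]
and the integral rewrites as $\int_G f(r)\overline{(\chi_1\chi_2\an)(r)}\,dr = \hat f(\chi_1\chi_2\an)$.

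Finally, from the definitions $N(\sigma)(\chi_1,\chi_2)=\sigma(\chi_2\chi_1\an)$ and $\phi(u)(\chi)=u(\chi\an)$, one computes $N(\phi(\hat f))(\chi_1,\chi_2)=\phi(\hat f)(\chi_2\chi_1\an)=\hat f(\chi_1\chi_2\an)$, which matches exactly the scalar factor obtained above. Combining with the identification of $\Psi(u\otimes v)$ yields the equality on rank-one tensors. The only subtle point, and the step where care is genuinely needed, is the first one: one must keep track of the conjugation on the second $L^2$-factor that is built into the tracial pairing, because it is exactly this that turns $F$ into $F$-followed-by-inversion on the second factor and produces the $\phi$ (rather than the bare $\hat f$) on the right-hand side.
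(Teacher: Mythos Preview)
Your proof is correct and follows essentially the same route as the paper's: reduce to rank-one tensors by linearity and continuity, identify $\Psi$ on such tensors as Fourier transform on the first leg and Fourier-then-inversion on the second, compute the effect of $\theta(f)$, and recognise the resulting scalar factor as $N(\phi(\hat f))$. The only cosmetic difference is that the paper introduces the auxiliary map $F_2=F\otimes F$, shows $F_2(\theta(f)(h))(x,y)=\hat f(xy)\,F_2(h)(x,y)$ by writing out the triple integral and applying Fubini (for which it restricts to continuous compactly supported $\xi,\eta$), and then passes to $\Psi$ via $\Psi(h)(x,y)=F_2(h)(x,y^{-1})$; you instead use directly that $F$ diagonalises $\rho$, i.e.\ $F\rho_{r^{-1}}w=\chi(r^{-1})Fw$, which is the operator-theoretic content of the same computation and avoids the explicit Fubini step.
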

\proof
Since the maps $\Psi$ and $\theta(f)$ are linear and continuous on $\cl T(G)$, it suffices to prove the Lemma
when  $h(x,y)=\xi(x)\bar\eta(y)$, where $\xi,\eta$ are continuous with compact support. 
Note that, since $F : L^2(G) \to L^2(\Gamma)$ is a unitary operator, the map 
$F_2$, given on elementary tensors by $F_2(\xi\otimes\eta) = F(\xi)\otimes F(\eta)$, 
is a well-defined bounded linear map from $\cl T(G)$ into $\cl T(\Gamma)$. 
Since the function 
$(s,t,r)\to  h(sr^{-1},tr^{-1}) f(r)$ is in $L^1(G\times G\times G),$
for $x,y\in \gG$ we have
\begin{align*}
F_2(\theta(f)(h))(x,y)& = 
\int_G\int_G \overline {x(s)} \overline{y(t)} (\theta(f)h)(s,t) dsdt\\
& = 
\int_G\int_G\int_G \overline{x(s)} \overline{y(t)} h(sr^{-1},tr^{-1}) f(r) drdsdt\\
& = 
\int_G\int_G\int_G \overline{x(sr)y(tr)} h(s,t) f(r) drdsdt\\
& = 
\int_G\int_G\int_G \overline{x(s)x(r)y(t)y(r)} h(s,t) f(r) drdsdt\\
& = 
\int_G\int_G\int_G \overline{ x(s) y(t) (xy)(r)}
h(s,t) f(r) drdsdt\\
& = 
\int_G\int_G\overline{ x(s) y(t)}
\left(\int_G \overline{(xy)(r)} f(r)dr\right) h(s,t) dsdt\\
& = 
\hat{f}(xy) \int_G\int_G\overline{ x(s) y(t)}
h(s,t) dsdt\\
& = 
\hat f(xy) F_2(h)(x,y) .
\end{align*}
But it is not hard to verify that, for all such $\xi,\eta$,  we have
\begin{align} \label{psi}
\Psi(\xi\ot\eta)(x,y) &=(\hat\xi\ot\overline{\hat\eta})(x,y) =
F_2(\xi\ot\bar\eta)(x,y\an)\nonumber \\
\text{and so }\; \Psi(h)(x,y) & =F_2(h)(x,y\an) 
\end{align} 
for all $h\in \cl T(G)$. 
Thus the previous equality gives
\begin{align*}
\Psi(\theta(f)(h))(x,y) &=F_2(\theta(f)(h))(x,y\an) = 
\hat{f}(xy\an)F_2(h)(x,y\an) \\ & =\phi(\hat{f})(yx\an)\Psi(h)(x,y)
\end{align*}
i.e. $\; \Psi(\theta(f)(h)) =N(\phi(\hat f))\Psi(h). \qquad\Box$

\medskip

An operator $T\in\cl B(L^2(\Gamma))$ is in $(\Psi  (\Ran J)) ^\bot$ if and only if  
$\du{T}{\Psi(\theta(f)h)}_t$ $= 0$ for all $f\in J$ and $h\in\cl T(G)$.  
It follows from Lemma \ref{l_htg} that this is equivalent to  the statement that
$\du{T}{N(\phi(\hat f))\Psi(h)}_t=0$ for all $f\in J$ and $h\in\cl T(G)$. Noting that 
$\Psi$ maps $\cl T(G)$ onto $\cl T(\Gamma)$, we obtain that $T$ is in $(\Psi (\Ran J)) ^\bot$ if and only if
it annihilates $N(\phi(\hat J))\cl T(\Gamma)$, i.e. 
if and only if $T$ is in $(\Sat \phi(\hat J))^\bot$. (Here, $\hat J=\{\hat f: f\in J\}$.)

We have thus shown that
\[
(\Psi (\Ran J)) ^\bot=  (\Sat \phi(\hat J))^\bot \, .
\] 
Using Theorem \ref{th_satlcg} for the ideal $\phi(\hat J)\subseteq A(\gG)$, we see that 
$$ (\Sat \phi(\hat J))^\bot=\Bim{}_{\cl D_{\gG}} (\phi(\hat J)^\bot)$$  
and so it follows that
\[
(\Psi (\Ran J)) ^\bot=  \Bim\mbox{}_{\cl D_{\gG}} (\phi(\hat J)^\bot) \, .
\] 
Thus the required equality (\ref{psi2}) becomes
\[
\Bim\mbox{}_{\cl D_{\gG}} (\phi(\hat J)^\bot)  =\Bim\mbox{}_{\cl D_{\gG}}(\Phi(J^\bot)).
\] 
It now suffices to prove that
\[
(\phi(\hat J))^\bot  =\Phi(J^\bot) \, .
\] 
We have 
\begin{align*}
\Phi(J^\bot) = \left\{\Phi(M_g): M_g \in\cl D_G,\,\int_Gg(s)f(s)ds=0\; \text{ for all } f\in J\right\} .
\end{align*}
On the other hand, using the fact
that $\vn(\gG)=\Phi(\cl D_G)$, we have that
\begin{align*}
(\phi(\hat J))^\bot &= \left\{T\in\vn(\gG):\du{T}{\phi(\hat f)}_A=0 \;  \text{ for all } f\in J\right\} \\
&= \left\{\Phi(M_g): M_g \in\cl D_G, \,\du{\Phi(M_g)}{\phi(\hat f)}_A=0 \; \text{ for all } f\in J\right\},
\end{align*}
where $\du{\cdot}{\cdot}_A$ denotes the Banach space duality between $\vn(\gG)$ and $A(\gG)$.

Thus, it suffices to prove that, for any $f\in L^1(G)$ and $g\in L^\infty(G)$ the equality
\begin{align}\label{star}
\du{\Phi(M_g)}{\phi(\hat f)}_A=\int_Gg(s)f(s)ds 
\end{align}
holds. Fix $f\in L^1(G)$ and note that both sides of (\ref{star}) are  linear and w*-continuous functions of $g$. 
Since $L^\infty(G)$ is the w*-closed
linear span of the set $\{x:x\in \gG\}$ of characters,
it suffices to prove (\ref{star}) when $g$ is a character $ x$. Now $\Phi(M_{ x})=\gl_x$. 
Since $\du{\gl_x}{\hat f}_A =\hat f(x)$, we have
\begin{align*}
\du{\Phi(M_{ x})}{\phi(\hat f)}_A &= \du{\gl_x}{\phi(\hat f)}_A = \phi(\hat f)(x)=\hat f(x\an) \\
&= \int_Gf(s)\overline{x\an(s)}ds = \int_Gf(s)x(s)ds,
\end{align*}
as required.

\medskip

This concludes the proof of the following: 
\begin{proposition}\label{p_abel}
 Let $G$ be a locally compact abelian group. Then, for  any closed ideal $J\subseteq L^1(G)$, 
$$(\Ran J)^{\perp} = \Bim(J^{\perp}).$$
\end{proposition}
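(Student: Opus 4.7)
The plan is to transfer the problem to the dual group $\Gamma$ via the Fourier transform and then invoke Theorem~\ref{th_satlcg} there. Let $F:L^2(G)\to L^2(\Gamma)$ be the Fourier--Plancherel unitary, let $\Phi:\cl B(L^2(G))\to\cl B(L^2(\Gamma))$ be the induced spatial isomorphism $\Phi(T)=FTF\an$, and let $\Psi:\cl T(G)\to\cl T(\Gamma)$ be its predual. The decisive structural point is that $\Phi$ swaps the two canonical algebras, $\Phi(\cl D_G)=\vn(\Gamma)$ and $\Phi(\vn(G))=\cl D_\Gamma$, so the sought equality, transported by $\Phi$, takes the form
\[(\Psi(\Ran J))^\perp \;=\; \Bim\mbox{}_{\cl D_\Gamma}(\Phi(J^\perp)),\]
whose right-hand side is exactly the kind of $\cl D_\Gamma$-bimodule produced by Theorem~\ref{th_satlcg}.

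The key technical step is a computational lemma linking the two saturation procedures:
\[\Psi(\theta(f)(h)) \;=\; N(\phi(\hat f))\,\Psi(h),\qquad h\in\cl T(G),\ f\in L^1(G),\]
where $\phi(u)(x)=u(x\an)$. By bilinearity, continuity, and a density argument it is enough to verify this on rank-one tensors $h(s,t)=\xi(s)\bar\eta(t)$ with $\xi,\eta$ continuous of compact support. A direct Fubini computation of $F_2(\theta(f)h)(x,y)$, expanding $x(sr)y(tr)=x(s)x(r)y(t)y(r)$ and recognising the $r$-integral as $\hat f(xy)$, combined with the explicit formula $\Psi(h)(x,y)=F_2(h)(x,y\an)$, yields the identity.

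From this lemma one reads off that the norm-closed subspace $\Psi(\Ran J)\subseteq\cl T(\Gamma)$ coincides with $\Sat(\phi(\hat J))$, the saturation of the ideal $\phi(\hat J)\subseteq A(\Gamma)$. Applying Theorem~\ref{th_satlcg} to this ideal gives
\[(\Psi(\Ran J))^\perp \;=\; (\Sat\phi(\hat J))^\perp \;=\; \Bim\mbox{}_{\cl D_\Gamma}\bigl((\phi(\hat J))^\perp\bigr),\]
reducing the proposition to the $\cl D_\Gamma$-level identity $(\phi(\hat J))^\perp = \Phi(J^\perp)$ inside $\vn(\Gamma)$. To establish this, I would verify the pairing formula
\[\du{\Phi(M_g)}{\phi(\hat f)}_A \;=\; \int_G g(s)f(s)\,ds,\qquad f\in L^1(G),\ g\in L^\infty(G).\]
Both sides are weak-$*$ continuous and linear in $g$, so it suffices to check the identity on characters $g=x\in\Gamma$, where $\Phi(M_x)=\gl_x$ and the formula reduces to $\hat f(x\an)=\int f(s)x(s)\,ds$; weak-$*$ density of the linear span of characters in $L^\infty(G)$ then gives the general case.

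The main analytic hurdle is the predual lemma $\Psi(\theta(f)h)=N(\phi(\hat f))\Psi(h)$: this is the only step where a triple integral must be manipulated via Fubini and where the reduction to compactly supported continuous tensors, followed by a norm-continuity argument for both $\Psi$ and $\theta(f)$, needs to be justified carefully. Once this lemma is established, the remaining steps are algebraic bookkeeping that packages the already-known Theorem~\ref{th_satlcg} for $A(\Gamma)$ into a statement for $L^1(G)$.
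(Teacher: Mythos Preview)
Your proposal is correct and follows essentially the same route as the paper: transfer via the Fourier--Plancherel unitary, the predual identity $\Psi(\theta(f)h)=N(\phi(\hat f))\Psi(h)$ (the paper's Lemma~\ref{l_htg}), application of Theorem~\ref{th_satlcg} to the ideal $\phi(\hat J)\subseteq A(\Gamma)$, and the final pairing computation on characters are exactly the paper's steps. One minor terminological slip: $\Psi:\cl T(G)\to\cl T(\Gamma)$ is the predual of $\Phi\an$, not of $\Phi$ (the predual of $\Phi$ goes the other way), but you have the direction right so this does not affect the argument.
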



\section{The discrete case}\label{s_dc}

In this section we assume  that $G$ is discrete; in this case, the Haar measure coincides with the counting measure. 
We denote by $\delta_s$ the function on $G$ defined by $\delta_s(t)=1$ if $s=t$ and 
$\delta_s(t)=0$ if $s\neq t$; note that $\{\delta_s\}_{s \in G}$ is an orthonormal basis of $L^2(G)$.
Let   $X$ be an operator in $ \cl B(L^2(G))$. We denote by  {$[X(s, t)]$} be the matrix of $X$  
 with respect to the basis $\{\delta_s\}_{s \in G}$. The diagonal $D(X)$ of $X$ is the  operator on  $ L^2(G)$
 whose matrix with respect to the basis 
$\{\delta_s\}_{s \in G}$ is given by $D(X)(s,t)=0$ if $s\neq t$ and $D(X)(s, t)=X(s, t)$ if $s=t$.
For $t\in G$, we denote by $D_t(X)$ the $t$-th diagonal of $X$, given by $D_t(X) = \lambda_tD(\lambda_{t^{-1}}X)$.
Note that the maps $X\mapsto D_t(X)$ are weak-*continuous and linear.


Also note that $D_r(X)=S_{N(\gd_r)}(X)$. Indeed, 
\begin{align*}
S_{N(\gd_r)}([X(s, t)]) = [\gd_r(ts\an)(X(s, t)] = \left [ \begin{cases} X(s, rs), & t=rs \\ 0, & t\ne rs \end{cases} \right] 
\end{align*}
Thus, if $u:G\to\bb C$ is a {\em finitely supported} function, then $S_{N(u)}(X)$
 is a linear combination of diagonals of $X$.  

Suppose that the group $G$ is weakly amenable  in the sense of \cite{CowH}. This means that there exists a net
$\{u_i\}_{i \in I}$ consisting of finitely supported elements of $A(G)$ and a positive constant $L$ such that 
$\nor{u_i}_{mcb}\le L$ for all $i$ and $u_i(s)\to 1$ for all $s\in G$ (here $\nor{u_i}_{mcb}$ is the 
completely bounded norm of $u_i$ as a multiplier of $A(G)$, or equivalently of the Schur multiplier $S_{N(u_i)}$). It follows that 
for each $h\in \cl T(G)$ we have 
$$\nor{N(u_i)h}_t\le L \nor{h}_t \quad \text{for all }\; i.$$

\begin{proposition}\label{diag}
Let $G$ be a weakly amenable (discrete) group. Then each $A\in \cl B(L^2(G))$ is in the weak-* closed linear span 
of its diagonals.  
\end{proposition}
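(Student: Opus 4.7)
The idea is to use the Schur multipliers $S_{N(u_i)}$ associated with the weak amenability net $(u_i)$: by the remarks preceding the statement, $S_{N(u_i)}(A)$ is a finite linear combination of diagonals of $A$, so the proposition follows once we show $S_{N(u_i)}(A) \to A$ in the weak-* topology of $\cl B(L^2(G))$.

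Each $S_{N(u_i)}$ is weak-* continuous, with preadjoint on $\cl T(G)$ given by pointwise multiplication $h \mapsto N(u_i) h$. The identity
$$\du{S_{N(u_i)}(A)}{h}_t = \du{A}{N(u_i) h}_t$$
therefore reduces the desired weak-* convergence to the statement that $N(u_i) h \to h$ in the trace-class norm for every $h \in \cl T(G)$. The uniform bound $\nor{N(u_i) h}_t \le L \nor{h}_t$, already recorded in the paper's setup, allows a standard three-$\eps$ argument to reduce this norm convergence to a norm-dense subspace of $\cl T(G)$.

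For the dense subspace I would take finite linear combinations of elementary tensors $\xi \otimes \bar\eta$ with $\xi, \eta \in \ell^2(G)$ of finite support. For such $h$, both $h$ and $N(u_i) h$ are supported on the same fixed finite subset of $G \times G$, and hence lie in a common finite-dimensional corner of $\cl T(G)$ on which the trace norm is equivalent to any other norm. Since $u_i \to 1$ pointwise, we have $N(u_i) \to 1$ pointwise on this finite set, and so $\nor{N(u_i) h - h}_t \to 0$. I do not anticipate any real obstacle: the weak amenability hypothesis enters only through the uniform norm bound that enables the reduction to the dense subspace, and the convergence on the dense subspace itself reduces to a finite-dimensional calculation because $G$ is discrete.
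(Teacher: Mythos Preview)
Your proposal is correct and follows essentially the same route as the paper: both reduce to proving $N(u_i)h \to h$ in the trace norm via a three-$\eps$ argument that uses the uniform multiplier bound $L$ together with a finitely-supported approximation of $h$. The only cosmetic difference is that you argue directly that $S_{N(u_i)}(A)\to A$ weak-*, whereas the paper phrases the same convergence dually (any $h\in\cl T(G)$ annihilating all diagonals of $A$ must lie in the trace-norm closed span of its own diagonals and hence annihilate $A$); the analytic content is identical.
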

\proof
Recall the diagonals of $A$ are $S_{N(\gd_t)}(A)$, $t\in G$. Thus if $h\in \cl T(G)$ annihilates all diagonals of $A$, then
\begin{align*}
0 = \du{S_{N(\gd_t)}(A)}{h} = \du{A}{N(\gd_t)h} \quad\text{for all } \; t\in G.
\end{align*}
But $N(\gd_t)h(s,r)=\gd_t(rs\an) h(s,r) = h(s, ts)$ when $r=ts$ and $=0$ otherwise. 
Thus $A$ must annihilate all the diagonals of $h$.
If we prove that $h$ is  in the trace-norm closed linear span of its diagonals, it will follow that 
$\du{A}{h} =0$, as required.  

It thus remains to prove that $h$ is  in the trace-norm closed linear span of its diagonals. For this, 
observe first that given $\eps>0$ there is an $h_\eps\in \cl T(G)$, supported on finitely many diagonals, such that
$\nor{h-h_\eps}_t<\eps$ (it suffices to take $h_\eps$ of the form $php$ where $p$ is the projection on the span of
a suitably large but finite  subset $\{\gd_t: t\in F\}$ , since such projections tend strongly to the identity). 

But note that $$\lim_i\nor{N(u_i)h_\eps-h_\eps}_t=0\, .$$
This is because on each of the finitely many nonzero diagonals $D_t(h_\eps)$ we have 
$N(u_i)D_t(h_\eps)= u_i(t)D_t(h_\eps)$, hence 
$\nor{N(u_i)D_t(h_\eps)-D_t(h_\eps)}_t=$ \\ $=|u_i(t)-1|\nor{D_t(h_\eps)}_t$,
 and $u_i(t)\to 1$.
Therefore we can choose $i_0$ such that $\nor{N(u_i)h_\eps-h\eps}_t<\eps$ for all $i\ge i_0$. 

Thus finally we have, for all $i\ge i_0$,
\begin{align*}
\nor{N(u_i)h-h}_t & \le \nor{N(u_i)(h-h_\eps)}_t+\nor{N(u_i)h_\eps-h_\eps}_t+\nor{h_\eps-h}_t \\ 
& \le L\nor{h-h_\eps}_t+\nor{N(u_i)h_\eps-h_\eps}_t+\nor{h_\eps-h}_t <L\eps +\eps +\eps \, .
\end{align*}
This shows that $h$ is in the trace-norm closed linear span of the family $\{N(u_i)h:i\in I\}$; but as observed above,
since each $u_i$ is finitely supported, each $N(u_i)h$ is a linear combination of diagonals {\em of $h$}. 
This proves the claim and concludes the proof of the Proposition. \qed

\begin{lemma}\label{Ldiag}
Let  $G$ be a discrete group and $J\subseteq L^1(G)$ be a closed left ideal. 
If $X \in (\Ran J)^\bot$, then $D_t(X) \in (\Ran J)^\bot$, for all $t \in G$.
\end{lemma}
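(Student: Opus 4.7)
The strategy is to exploit the $\vn(G)$-bimodule structure of $(\Ran J)^\bot$ together with Proposition \ref{reconstruct}, which identifies $(\Ran J)^\bot \cap \cl D$ with $J^\perp$, in order to reduce everything to evaluating a single matrix entry.

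First I reduce to the main diagonal $t=e$. Since $\lambda_{t^{-1}}\in\vn(G)$ and $(\Ran J)^\bot$ is a $\vn(G)$-bimodule by Lemma \ref{l}, the operator $\lambda_{t^{-1}}X$ lies in $(\Ran J)^\bot$; then the identity $D_t(X)=\lambda_t\,D(\lambda_{t^{-1}}X)$, together with another application of the bimodule property, shows that it suffices to prove: if $Y\in(\Ran J)^\bot$, then $D(Y)\in(\Ran J)^\bot$.

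Second, note that $D(Y)=M_\phi\in\cl D$ with $\phi(s)=Y(s,s)$. By Proposition \ref{reconstruct}, the relation $D(Y)\in(\Ran J)^\bot\cap\cl D$ is equivalent to $\phi\in J^\perp$, i.e.\ to
\[
\sum_{s\in G} Y(s,s)\,f(s)=0 \quad\text{for every } f\in J.
\]
To obtain this, I compute the matrix entries of $\Theta(f)(Y)$ in the discrete setting. Since $\Delta\equiv 1$ and $\rho_r^*\delta_t=\delta_{tr}$, one has $\Ad\rho_r(Y)(s,t)=\du{Y\rho_r^*\delta_t}{\rho_r^*\delta_s}=Y(sr,tr)$; inserting this into the defining formula for $\Theta(f)$ and using that $f\in\ell^1(G)$ to ensure absolute convergence, we obtain
\[
\Theta(f)(Y)(s,t)=\sum_{r\in G} Y(sr,tr)\,f(r).
\]
By Lemma \ref{l} the hypothesis $Y\in(\Ran J)^\bot$ is exactly $\Theta(f)(Y)=0$ for all $f\in J$; evaluating at $s=t=e$ gives $\sum_{r}Y(r,r)f(r)=0$, which is the required identity.

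The only real point to check is the matrix-entry formula for $\Theta(f)(Y)$; this can be done by pairing $\Theta(f)(Y)$ with rank-one elements $\delta_v\otimes\delta_u\in\cl T(G)$ and using the integral representation defining $\Theta$ together with the computation of $\Ad\rho_r$ above. Once this formula is in hand, the bimodule reduction makes the rest of the argument essentially automatic.
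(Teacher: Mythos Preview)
Your proof is correct. The reduction to the main diagonal via the $\vn(G)$-bimodule property is the same as in the paper, and your matrix-entry computation of $\Theta(f)(Y)$ is valid.

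The paper's argument is shorter and more structural: it observes directly that the diagonal expectation $D$ commutes with each $\Ad\rho_r$ (since $\rho_r$ permutes the basis $\{\delta_s\}$), hence by weak-* continuity $D\circ\Theta(f)=\Theta(f)\circ D$; thus $\Theta(f)(D(Y))=D(\Theta(f)(Y))=D(0)=0$, and Lemma~\ref{l} finishes. Your route instead extracts only the single matrix entry $\Theta(f)(Y)(e,e)=\sum_r Y(r,r)f(r)$ and then appeals to Proposition~\ref{reconstruct} to pass from $\phi\in J^\perp$ back to $M_\phi\in(\Ran J)^\perp$. Both are fine; the paper's approach avoids invoking Proposition~\ref{reconstruct} and makes transparent the general principle (the diagonal map intertwines the $\Theta$-action), while yours is a direct hands-on verification that uses machinery already established in Section~\ref{s_s}.
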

\begin{proof}
A direct calculation shows that $$D(\rho_sX\rho_{s^*})=\rho_sD(X)\rho_{s^*}.$$
It follows by the weak-* continuity of $D$ that 
$$\Theta(f)D(X)=D(\Theta(f)(X))$$ 
for $f \in L^1(G)$.
The conclusion follows from Lemma \ref{l}.
\end{proof}

\begin{proposition}\label{p_disrc}
Let $G$ be a discrete weakly amenable group and $J\subseteq L^1(G)$ be a closed left ideal. Then
$$(\Ran J)^{\perp} = \Bim(J^{\perp}).$$
\end{proposition}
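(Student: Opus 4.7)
The plan is to combine the three ingredients that have already been set up: the easy inclusion from Proposition \ref{propeasy}, the weak-$*$ density of the linear span of diagonals (Proposition \ref{diag}), and the reconstruction result (Proposition \ref{reconstruct}).

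Since $\Bim(J^{\perp})\subseteq(\Ran J)^{\perp}$ is already Proposition \ref{propeasy}, only the reverse inclusion needs attention. So fix $X\in(\Ran J)^{\perp}$. By Proposition \ref{diag}, $X$ lies in the weak-$*$ closed linear span of its diagonals $\{D_t(X):t\in G\}$. Since $\Bim(J^{\perp})$ is weak-$*$ closed, it suffices to show that every diagonal $D_t(X)$ belongs to $\Bim(J^{\perp})$.

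By Lemma \ref{Ldiag}, each $D_t(X)$ still lies in $(\Ran J)^{\perp}$. By its very definition, $D_t(X)=\lambda_t D(\lambda_{t^{-1}}X)$, and $D(\lambda_{t^{-1}}X)\in\cl D_G$, so we may write $D_t(X)=\lambda_t M_{a_t}$ for some $a_t\in L^{\infty}(G)$. Since $(\Ran J)^{\perp}$ is a $\vn(G)$-bimodule (Lemma \ref{l}), multiplying on the left by $\lambda_t^*=\lambda_{t^{-1}}$ gives $M_{a_t}\in(\Ran J)^{\perp}\cap\cl D_G$. Now Proposition \ref{reconstruct} identifies this intersection with $J^{\perp}$, so $a_t\in J^{\perp}$, i.e.\ $M_{a_t}\in J^{\perp}\subseteq\Bim(J^{\perp})$. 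Applying once more the bimodule property, this time of $\Bim(J^{\perp})$, we obtain $D_t(X)=\lambda_t M_{a_t}\in\Bim(J^{\perp})$, and the proof is complete.

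The work was done upstream: weak amenability entered only through Proposition \ref{diag}, which provides the diagonal decomposition. The present proof is then just a packaging step: recognise a diagonal as an element of $\vn(G)\cdot\cl D_G$, strip the $\vn(G)$-factor using the bimodule property of $(\Ran J)^{\perp}$, and invoke Proposition \ref{reconstruct} to land in $J^{\perp}$. I do not see a real obstacle; the only thing to double-check is that $D_t$ indeed has the form $\lambda_t M_{a_t}$ with $a_t\in L^{\infty}(G)$, which is immediate from the definition $D_t(X)=\lambda_t D(\lambda_{t^{-1}}X)$ and the fact that $D$ maps $\cl B(L^2(G))$ into $\cl D_G$ in the discrete setting.
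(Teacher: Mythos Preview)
Your proof is correct and follows essentially the same route as the paper's: both use Proposition~\ref{diag} to reduce to diagonals, Lemma~\ref{Ldiag} to keep $D_t(X)$ in $(\Ran J)^\perp$, and then identify the diagonal part with an element of $J^\perp$. The only cosmetic difference is that you invoke Proposition~\ref{reconstruct} where the paper cites Lemma~\ref{easy} directly, and you apply Lemma~\ref{Ldiag} before stripping $\lambda_t$ rather than after---but these are equivalent orderings of the same steps.
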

\begin{proof}
Let $X \in (\Ran J)^\perp$.  Since $(\Ran J)^\perp$ is a $\vn (G)$-bimodule $\gl_{t^{-1}}X\in (\Ran J)^\perp$ and 
it follows from Lemma \ref{Ldiag} that $D(\lambda_{t^{-1}}X)\in (\Ran J)^{\perp}$.
Now, $D(\lambda_{t^{-1}}X)=M_{a_t}$ for some $a_t \in \ell^{\infty}(G)$. It follows from 
Lemma \ref{easy} that $a_t \in J^{\bot}$, and hence $D_t(X) \in \Bim(J^{\bot})$.
Since the  operator $X$ is in the weak-* closed linear span of its diagonals (Proposition \ref{diag}), 
we obtain that $X \in \Bim(J^\bot)$.
 
By Proposition \ref{propeasy}, the proof is complete.
\end{proof}

\begin{remark}
In a previous version of this paper we claimed that Proposition \ref{p_disrc} holds in any discrete group. 
We wish to thank J. Crann and M. Neufang who pointed out that our argument was incomplete. 
\end{remark}


\section{The compact case}\label{s_cc}

In this section we assume that $G$ is compact. We denote by $\widehat{G}$ the unitary dual of $G$, 
that is, the set of all (equivalence classes of) irreducible representations. 
If $\pi \in\widehat G$, 
we denote by $H_\pi$ the space of the representation $\pi$, and by $d_{\pi}$ its dimension.
Suppose that for each irreducible representation  $(\pi, H_\pi)$ of $G$ we are given a subspace $E_\pi\subseteq H_\pi$
(possibly trivial). If $E_\pi\neq \{0\}$ choose an orthonormal  basis $e_1,\dots, e_{s_\pi}$ of $E_\pi$  
and extend it to an orthonormal 
basis $e_1,\dots, e_{d_\pi}$ of $H_\pi$. If $E_\pi= \{0\}$ let $e_1,\dots, e_{d_\pi}$ be an orthonormal  basis  of $H_\pi$. 
For $\pi \in \widehat{G}$, we denote by $\pi_{ij}$,  $1\le i,j\le d_\pi$ the matrix coefficients 
of the representation $\pi$ with respect to the basis $e_1,\dots, e_{d_\pi}$ of $H_\pi$;
thus, 
\begin{equation}
\pi_{ij}(s)=(\pi(s)e_j, e_i), \ \ \ s\in G, \ i,j = 1,\dots,d_{\pi}. \label{eq2}
\end{equation}

Let $E=\{E_{\pi}\}_{\pi \in \widehat{G}}$ and consider the set 
\begin{align*}
J(E) &= \overline{\sspp{\left\{\pi_{ij}:1\le i\le d_\pi,1\le j\le s_\pi, \pi\in\widehat G, \;E_{\pi}\neq \{0\}\right\}}}^{\nor{\cdot}_1},  
\end{align*}
where $\|\cdot\|_1$ is the $L^1(G)$ norm.
Clearly, $J(E)$ is a closed left ideal of $L^1(G)$, being invariant under left translations.   
Conversely, every closed left ideal of $L^1(G)$ is of this form \cite[38.13]{hr} for some $E=\{E_{\pi}\}_{\pi \in \widehat{G}}$.

Denoting  by $J(E)^{\perp}$ the  annihilator in $L^{\infty}(G)$, we have: 

\begin{proposition}\label{41}
The space $J(E)^{\perp}$  is the $w^*$-closure of the linear span of 
\begin{align*}
 \cl S: =&
\{\overline{\pi'_{ij}}:1\le i\le d_{\pi'},s_{\pi'}< j\le d_{\pi'}\, ,  \;E_{\pi'}\neq \{0\}\} \; \cup \; \\ 
&\cup \;\{\overline{\pi'_{ij}}:1\le i,j\le d_{\pi'}\, , \;E_{\pi'}= \{0\}\}.
\end{align*}
\end{proposition}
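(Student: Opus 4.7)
The proof combines Schur orthogonality with a left-convolution approximation by trigonometric polynomials.

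For the easy inclusion $\overline{\sspp\cl S}^{w^*}\subseteq J(E)^\perp$, fix $\overline{\pi'_{ij}}\in\cl S$ and a generator $\pi_{kl}$ of $J(E)$, so $E_\pi\neq\{0\}$ and $1\le l\le s_\pi$. Schur orthogonality gives
\[
\int_G \pi_{kl}(s)\,\overline{\pi'_{ij}(s)}\,ds=\tfrac{1}{d_\pi}\delta_{\pi\pi'}\delta_{ki}\delta_{lj}.
\]
A nonzero value would require $\pi=\pi'$, $k=i$, $l=j$; but $\overline{\pi'_{ij}}\in\cl S$ forces either $E_{\pi'}=\{0\}$ or $j>s_{\pi'}$, each contradicting a generator condition. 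Hence $\overline{\pi'_{ij}}$ annihilates every generator of $J(E)$, and by continuity of the $L^\infty$--$L^1$ pairing, every element of $J(E)$; so $\overline{\pi'_{ij}}\in J(E)^\perp$. Taking $w^*$-closed spans and using that $J(E)^\perp$ is $w^*$-closed gives this direction.

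For the reverse inclusion, let $a\in J(E)^\perp$. I will produce a net of polynomials $p_\alpha\in\sspp\cl S$ with $p_\alpha\to a$ in $w^*$. Since $J(E)$ is a closed left ideal of $L^1(G)$, hence left-translation invariant, its annihilator $J(E)^\perp\subseteq L^\infty(G)$ is also left-translation invariant; so for any $K\in L^1(G)$, the convolution $K*a$ (understood as a $w^*$-integral of left translates of $a$) again lies in $J(E)^\perp$. Using $\pi(su^{-1})=\pi(s)\pi(u)^*$ one computes $\overline{\pi_{kl}}*a\in\sspp\{\overline{\pi_{km}}:1\le m\le d_\pi\}$, so $K*a$ is a trigonometric polynomial whenever $K$ is. Moreover, a polynomial $p=\sum c_{ij}^\pi\overline{\pi_{ij}}$ in $J(E)^\perp$ must have $c_{kl}^{\pi'}=d_{\pi'}\int_G p(s)\pi'_{kl}(s)\,ds=0$ for every generator $\pi'_{kl}$ of $J(E)$, again by Schur, so $p\in\sspp\cl S$.

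To produce the approximating net, take an $L^1$-approximate identity $\phi_\alpha$ consisting of normalized indicators of shrinking symmetric neighborhoods of $e$; then $\phi_\alpha*a\to a$ in $w^*$-$L^\infty$ by the standard duality argument. By Peter--Weyl (together with Stone--Weierstrass density of trigonometric polynomials in $C(G)$), trigonometric polynomials are $\|\cdot\|_1$-dense in $L^1(G)$; choose polynomials $K_\alpha$ with $\|K_\alpha-\phi_\alpha\|_1\to 0$. Young's inequality gives $\|K_\alpha*a-\phi_\alpha*a\|_\infty\le\|K_\alpha-\phi_\alpha\|_1\|a\|_\infty\to 0$, hence $K_\alpha*a\to a$ in $w^*$. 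Each $K_\alpha*a$ is a polynomial in $J(E)^\perp$ and thus lies in $\sspp\cl S$, so $a\in\overline{\sspp\cl S}^{w^*}$. The main delicate point is maintaining convergence in the $w^*$-topology rather than merely in $L^2$: the Peter--Weyl partial sums of $a$ converge in $L^2$ but need not converge in $w^*$-$L^\infty$, while left-convolution with a polynomial close in $L^1$-norm to a genuine approximate identity forces $w^*$-convergence via Young's inequality.
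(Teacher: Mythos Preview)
Your proof is correct. The easy inclusion is handled identically to the paper, via Schur orthogonality. For the reverse inclusion, however, your route is genuinely different from the paper's.

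The paper argues by duality: it shows that the preannihilator $\cl S_\bot\subseteq L^1(G)$ is contained in $J(E)$. Given $f\in\cl S_\bot$, it convolves with an $L^2$-valued approximate unit to obtain $f_\nu=g_\nu*f\in L^2(G)\cap\cl S_\bot$, and then invokes the Peter--Weyl orthogonal decomposition of $L^2(G)$ to see that each $f_\nu$ lies in the $L^2$-closed span of the generators of $J(E)$, hence in $J(E)$; passing to the limit in $L^1$ finishes. You instead work directly in $L^\infty(G)$: given $a\in J(E)^\perp$ you build an explicit $w^*$-approximating net in $\sspp\cl S$ by left-convolving $a$ with trigonometric polynomials that approximate an approximate identity in $L^1$-norm, using left-translation invariance of $J(E)^\perp$ to keep the convolutions inside the annihilator, and Schur orthogonality once more to see that any trigonometric polynomial in $J(E)^\perp$ already lies in $\sspp\cl S$.

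Both arguments are short; the paper's is slightly slicker because the $L^2$-orthogonal expansion does the coefficient-killing automatically, whereas you have to manage the $w^*$-convergence by hand via Young's inequality. On the other hand your approach is more constructive, producing an explicit approximating net in $\sspp\cl S$ rather than passing through a preannihilator argument.
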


\proof 
Let $\pi' \in \widehat{G}$ be such that $E_{\pi'}\neq \{0\}$ and $1\le k\le d_{\pi'},s_{\pi'}< l\le d_{\pi'}$.
Let $\pi \in \widehat{G}$ be such that $E_{\pi}\neq \{0\}$ and $1\le i\le d_\pi,1\le j\le s_\pi$. 
If $\pi'$ is not equivalent to $\pi$, then $\int\overline{\pi'_{kl}(t)}{\pi_{ij}(t)}dt=0$ for all $k,l$
by the Schur orthogonality relations \cite[5.8]{follharm}. 
If $\pi'$ is equivalent to $ \pi$, 
then  $\int\overline{\pi'_{kl}(t)}{\pi_{ij}(t)}dt=0$ for all $k$ 
since $j\ne l$. Moreover, it is clear that 
$$\bigcup \;\{\overline{\pi'_{ij}}:1\le i,j\le d_{\pi'}\, , \;E_{\pi'}= \{0\}\}\subseteq J(E)^{\perp}.$$
Hence $\cl S\subseteq J(E)^{\perp}$.

For the reverse containment, we show that the preannihilator $\cl S_{\bot}$ is contained in $J(E)$.
Now $\cl S_{\bot}$ is a closed left ideal in $L^1(G)$, since the linear span of $\cl S$
 is  invariant under left translations.
 Take $f\in \cl S_{\bot}$. Let $(g_\nu)$ be an approximate  unit for $L^1(G)$  
consisting of  functions in $L^2(G)$
and set  $f_\nu=g_\nu*f$; so $f_\nu\in L^2(G)$ and $\nor{f-f_\nu}_1\to 0$. Since each $f_\nu$ is in 
$\cl S_{\bot}$, it is orthogonal (in the $L^2(G)$ sense)
to $\pi'_{ij}$'s whose conjugate  generate $\cl S$ and hence, by the Peter-Weyl theorem, 
it belongs to the $L^2(G)$ closed span 
of the remaining $\pi'_{ij}$'s, that is,  to the  closure of \vspace{-0.5ex}
$$\sspp{\{\pi'_{ij}: 1\le i\le d_{\pi'}, 1\le j\le s_{\pi'} \, , \;E_{\pi'}\neq \{0\}\}} \vspace{-0.5ex}$$ 
in $L^2(G)$.
But this closure of this set is contained in its $L^1(G)$ closure, which coincides with $J(E)$.  
Thus $f_\nu\in J(E)$ for each $\nu$, and so $f\in J(E)$. \qed

\begin{remark} We would like  to observe that the above Proposition may be proved using the theory of 
strong M-bases in Banach spaces: 

 Let $X$ be a Banach space.
A family of vectors $(u_i)$ is called a Markushevich basis or an $M$-basis of $X$ \cite[Definition 1.7]{hmsvz}
if there exists a family $(u'_i)$ in the dual $X^*$  of $X$  such that 
 \begin{enumerate}
  \item    $\du{u'_i} {u_j}_X=\delta_{ij}$, where $\du{\cdot}{\cdot}$ is the pairing between $X^*$  and $X$
    
  \item  $\overline{ \sspp\{u_i\}}^{\nor{\cdot}}=X$
    
  \item $ \overline{\sspp\{u'_i\}}^{w*}=X^*$.
   \end{enumerate}
The family $(u_i)$  is called a strong $M$-basis \cite[Definition 1.32]{hmsvz} if for every $x \in X$  we have 
$$x \in \overline{\sspp\{u_i: \du{u'_i}{x}_X\neq 0\}}^{\nor{\cdot}}.$$

  It follows from \cite[2.9.3]{ed} that the family $\{\pi_{ij}: 1\leq i, j\leq d_{\pi}, \pi \in \widehat{G}\}$ as 
  defined in (\ref{eq2}) 
 is a strong $M$-basis of the space  $L^1(G)$. 
 Proposition \ref{41} now follows from \cite[Proposition 1.35]{hmsvz}.
\end{remark}

By the Peter-Weyl theorem (see for example \cite[Theorem 5.12]{follharm}), $L^2(G)$ is the orthogonal direct sum 
 \[
 L^2(G)=\bigoplus_{\pi\in\widehat G}\cl E_\pi \vspace{-1ex}
 \]
where \vspace{-1ex}
 \[\cl E_\pi =\sspp{ \{\sqrt{d_\pi}\pi_{ij}, 1\le i,j\le d_\pi\}}.\]
Moreover, $ \sqrt{d_\pi}\pi_{ij}, 1\le i,j\le d_\pi$ is an orthonormal basis of $\cl E_\pi$. 
If $\pi \in \widehat{G}$, denote by  $P_\pi\in \cl B(L^2(G))$  the orthogonal projection onto
$\cl E_\pi$.

With respect to this decomposition, each $T\in\cl B(L^2(G))$ corresponds to an infinite  matrix 
$T=[T_{\pi,\pi'}]$ of operators $T_{\pi,\pi'}\in \cl B(\cl E_\pi',\cl E_{\pi})$
which act on finite dimensional spaces, where $T_{\pi,\pi'}=P_{\pi}TP_{\pi'}$.

\medskip
\begin{remark}\label{r_vn}
If $\pi \in \widehat{G}$ then  $P_\pi \in\vn (G)$.
\end{remark}
Indeed, since  $\cl E_\pi$ is $\rho_s$ invariant, we have
$P_\pi\rho_s=\rho_sP_\pi$ for all $s\in G$. 

\begin{remark} \label{two}
An operator $T$ is in $(\Ran J)^\bot$ (resp. $\Bim(J^\bot)$) if and only if 
$T_{\pi,\pi'}$  is in $(\Ran J)^\bot$ (resp. $\Bim(J^\bot)$), for all $\pi,\pi'\in \hat{G}$.
\end{remark} 
\begin{proof}  
Since  $(\Ran J)^\bot$ is a $\vn(G)$-bimodule, if $T\in (\Ran J)^\bot$ then, by Remark \ref{r_vn},
$T_{\pi,\pi'}=P_\pi TP_{\pi'}$ is in $(\Ran J)^\bot$. Conversely, if $T_{\pi,\pi'}\in (\Ran J)^\bot$
for all $\pi,\pi'\in \hat{G}$
then, since $T$ is in the weak-*closed linear span of $\{T_{\pi,\pi'}:\pi,\pi'\in\widehat G\}$ and $(\Ran J)^\bot$
is a weak-*closed subspace, it follows that  $T\in (\Ran J)^\bot$. 

The proof for $\Bim(J^\bot)$ is identical.
\end{proof}

\begin{theorem}\label{th_mainc}  
Let $G$ be a compact group and $J\subseteq L^1(G)$ be a closed left ideal. Then
$$(\Ran J)^\perp = \Bim(J^\perp).$$
\end{theorem}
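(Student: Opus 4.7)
By Proposition~\ref{propeasy}, $\Bim(J^{\perp})\subseteq(\Ran J)^{\perp}$, so only the reverse inclusion requires proof. By Remark~\ref{two}, it suffices to prove, for each fixed pair $\pi,\pi'\in\widehat G$, that every $T\in(\Ran J)^{\perp}$ of the form $T=P_\pi T P_{\pi'}$ belongs to $\Bim(J^{\perp})$. The problem thereby reduces to finite-dimensional linear algebra on the block $\cl B(\cl E_{\pi'},\cl E_\pi)$, and this is where the compactness of $G$ becomes decisive.

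The plan is to introduce the matrix-unit basis $\{v^{\pi,\pi'}_{ij,kl}\}$ of $\cl B(\cl E_{\pi'},\cl E_\pi)$ given by the rank-one operators $\sqrt{d_{\pi'}}\pi'_{kl}\mapsto\sqrt{d_\pi}\pi_{ij}$, and to compute $\Ad\rho_r$ on it using the standard formulas $\rho_r\pi_{ij}=\sum_k\pi_{kj}(r)\pi_{ik}$ and $\lambda_s\pi_{ij}=\sum_k\overline{\pi_{ki}(s)}\pi_{kj}$. One then sees that $\Theta(f)(v^{\pi,\pi'}_{ij,kl})$ is a linear combination of the $v^{\pi,\pi'}_{ij',kl'}$'s with coefficients $\int_G f(r)\pi_{j'j}(r)\overline{\pi'_{l'l}(r)}\,dr$. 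Since matrix coefficients are dense in $L^1(G)$ by Peter--Weyl, Proposition~\ref{41} (which identifies $J=J(E)$) translates $T\in(\Ran J)^{\perp}$ into a finite system of linear equations on the coefficients $(t_{ij,kl})$ of $T$; Schur orthogonality evaluates the resulting triple integrals and yields an explicit description of the subspace $V(\pi,\pi'):=(\Ran J)^{\perp}\cap\cl B(\cl E_{\pi'},\cl E_\pi)$.

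On the bimodule side, one takes the generators $\overline{\pi''_{mn}}$ of $J^{\perp}$ furnished by Proposition~\ref{41} and computes $P_\pi M_{\overline{\pi''_{mn}}} P_{\pi'}$ by decomposing the pointwise products $\overline{\pi''_{mn}(s)}\pi'_{kl}(s)$ via Clebsch--Gordan; this is nonzero exactly when $\pi\subseteq\overline{\pi''}\otimes\pi'$. Applying the $\vn(G)$-bimodule action---using that $\vn(G)|_{\cl E_\pi}\cong\cl B(H_\pi)\otimes\id$ acts irreducibly on the ``row'' coordinate of $\cl E_\pi\cong H_\pi\otimes H_\pi^{*}$---one generates from each such product a family of elements of $\Bim(J^{\perp})\cap\cl B(\cl E_{\pi'},\cl E_\pi)$; their linear span is a subspace $W(\pi,\pi')$ contained in $V(\pi,\pi')$.

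The theorem reduces to establishing the equality $W(\pi,\pi')=V(\pi,\pi')$ inside this finite-dimensional block, which is the main technical obstacle. One must verify that the vanishing conditions defining $V(\pi,\pi')$ (indexed by the generators of $J$, namely those $\pi''$ with $E_{\pi''}\neq\{0\}$ together with indices $1\le a\le d_{\pi''}$ and $1\le b\le s_{\pi''}$) match exactly the multiplicity structure of the operators constructed from the complementary generating set $\cl S$ of $J^{\perp}$. Once this matching is carried out by a representation-theoretic counting argument governed by the decomposition of $\overline{H_{\pi''}}\otimes H_{\pi'}$ into irreducibles, Remark~\ref{two} delivers the theorem.
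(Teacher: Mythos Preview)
Your outline correctly reduces the question, via Remark~\ref{two}, to the finite-dimensional block $\cl B(\cl E_{\pi'},\cl E_\pi)$, but the heart of the matter---the equality $W(\pi,\pi')=V(\pi,\pi')$---is deferred to ``a representation-theoretic counting argument governed by the decomposition of $\overline{H_{\pi''}}\otimes H_{\pi'}$'' that is never carried out. This is not routine bookkeeping: for a general compact group the Clebsch--Gordan multiplicities admit no closed formula, and there is no evident dimension identity showing that the span of the operators $P_\pi A M_{\overline{\pi''_{mn}}}BP_{\pi'}$ (with $\overline{\pi''_{mn}}\in\cl S$ and $A,B\in\vn(G)$) exhausts the solution space of the equations $\Theta(f)(T)=0$, $f\in J$. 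Your triple integrals $\int f\,\pi_{j'j}\,\overline{\pi'_{l'l}}$ on the $V$ side and the products $\overline{\pi''_{mn}}\,\pi'_{kl}$ on the $W$ side both encode Clebsch--Gordan data, and matching them would be the whole proof. As written, the proposal identifies the obstacle without surmounting it.

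The paper sidesteps Clebsch--Gordan entirely by a different choice of basis. Rather than the matrix units $v^{\pi,\pi'}_{ij,kl}$, it uses that the monomials $\{P_\pi M_{\overline{\sigma_{ij}}}\lambda_s P_{\pi'}:\sigma\in\widehat G,\ 1\le i,j\le d_\sigma,\ s\in G\}$ span the finite-dimensional block (since the $M_{\overline{\sigma_{ij}}}\lambda_s$ generate a $*$-algebra with trivial commutant), extracts from them an algebraic basis, and writes $P_\pi TP_{\pi'}=\sum_k c_k P_\pi M_k\lambda_{s_k}P_{\pi'}$. One then applies $\Theta(f)$ with $f=d_{\sigma}\sigma_{nn}$ for each $\sigma$ with $E_\sigma\neq\{0\}$ and $1\le n\le s_\sigma$: by Remark~\ref{rem28} and Schur orthogonality, $\Theta(f)(M_{\overline{\tau_{ij}}})=\delta_{\tau\sigma}\delta_{jn}M_{\overline{\tau_{ij}}}$, so $\Theta(f)$ fixes exactly the monomials with $M_k=M_{\overline{\sigma_{in}}}$ and annihilates the rest. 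Since $f\in J$ gives $\Theta(f)(P_\pi TP_{\pi'})=0$, linear independence forces those coefficients $c_k$ to vanish. The surviving monomials involve only $M_{\overline{\sigma_{ij}}}$ with $\overline{\sigma_{ij}}\in\cl S$, hence lie in $\Bim(J^\perp)$ by Proposition~\ref{41}. The point is that this monomial basis is simultaneously adapted to $\Bim(J^\perp)$ (via Proposition~\ref{41}) and to the action of $\Theta(f)$ (via Remark~\ref{rem28}); no tensor-product decomposition is needed.
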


\begin{proof}  
By Proposition \ref{propeasy}, it is enough to show that, 
if an operator $T$ is in $(\Ran J)^\bot$, then $T\in\Bim(J^\bot)$.
By Remark \ref{two}, it suffices to prove that, for all  $\pi,\pi'\in\widehat G$, we have  $T_{\pi,\pi'}\in\Bim(J^\bot)$.

Fix $\pi,\pi'\in\widehat G$ and write $P:=P_{\pi}$ and $Q:=P_{\pi'}$ to simplify notation.
We have to prove that $PTQ\in\Bim(J^\bot)$.
Recall that the linear span of the set
\[ \left\{M_{\overline{\pi_{ij}}}\gl_s: \pi\in \widehat G, 1\le i, j\le d_\pi,  s\in G\right\}\]
is a *-algebra with trivial commutant, it is weak-*dense in $\cl B(L^2(G))$. 
It follows that the linear span of the set 
\begin{equation}\label{eq_PM}
\{PM_{\overline{\pi_{ij}}}\gl_sQ: \pi\in \widehat G, 1\le i, j\le d_\pi,  s\in G\} \tag{*}
\end{equation}
is weak-*dense in $\cl B(QL^2(G), PL^2(G))$. Since  $\cl B(QL^2(G), PL^2(G))$ is finite-dimensional, we have 
\[ \sspp\{PM_{\overline{\pi_{ij}}}\gl_sQ:\pi\in \widehat G, 1\le i, j\le d_\pi, s\in G\} = \cl B(QL^2(G), PL^2(G)).\]

From the generating set (\ref{eq_PM}) we choose an algebraic basis $\{PM_k\gl_{s_k}Q: 1\le k\le m\}$ of 
$\cl B(QL^2(G), PL^2(G))$, where each $M_k$ is  $M_{\overline{\pi_{ij}}}$ for some $\pi\in\widehat G$ and some
$1\le i, j\le d_\pi$.
There are scalars $c_k$ such that 
\begin{equation}\label{ptq}
PTQ = \sum_{k=1}^mc_kPM_k\gl_{s_k}Q \, .
\end{equation}
We will show that  the only nonzero terms in this sum are those for which $M_k = M_{\overline{\pi_{ij}}}$, for some 
$\pi$, $i$, $j$, 
where, either $E_{\pi}=\{0\}$, or $E_{\pi}\neq \{0\}$ and $s_\pi < j\le d_\pi$. 
{Since such terms are in $\Bim(J^\bot)$ it will follow that  
 $PTQ \in \Bim(J^\bot)$, thus completing the proof.}  

For a continuous function $f$ we have (recalling that $\Theta(f)$ is a $\vn(G)$-bimodule map)
\begin{equation}\label{gptq}
\Theta(f)(PTQ) = \sum_{k=1}^mc_kP\Theta(f)(M_k)\gl_{s_k}Q \, . 
 \end{equation}
Fix $k\in \{1,\dots,m\}$, and let $\pi_{ij}$ be such that $M_k = M_{\pi_{ij}}$. 
Then
\begin{align*}
\Theta(f)(M_k)=\Theta(f)(M_{\overline{\pi_{ij}}}) =\int_G f(r)(\rho_rM_{\overline{\pi_{ij}}}\rho_r^*)dr = M_g  
\end{align*}
where  
$ g(x)=\int_G f(r)\overline{\pi_{ij}}(xr)dr$ (Remark \ref{rem28}),  that is 
\begin{align*}
g(x) &=  \sca{f, \sum_k \pi_{ik}(x)\pi_{kj}}=\sum_k \overline{\pi_{ik}(x)}\sca{f, \pi_{kj}}.
\end{align*}

Let $\pi'\in\widehat G$ be such that $E_{\pi'}\neq \{0\}$ and choose  $f=d_{\pi'}\pi'_{nn}$ where $1\le n\le s_{\pi'}$. Then, by the orthogonality relations, 
\begin{align*}
g(x) = \sum_k \overline{\pi_{ik}(x)}\gd_{nk}\gd_{nj} \gd_{\pi\pi'}= \overline{\pi_{in}(x)}\gd_{nj}\gd_{\pi\pi'} 
\end{align*}
It follows that  
\begin{align*}
\Theta(f)(M_{\overline{\pi_{ij}}}) =\Theta(d_{\pi'}\pi'_{nn})(M_{\overline{\pi_{ij}}}) 
=  M_{\overline{\pi_{in}}}\gd_{nj}\gd_{\pi\pi'}=M_{\overline{\pi_{ij}}}\gd_{nj}\gd_{\pi\pi'} \, .
\end{align*}
Hence all the monomials 
in the expression (\ref{gptq}) for $\Theta(f)(PTQ)$ must vanish, except when $\pi=\pi'$ and $j=n$, in which case they are left unchanged. 
Thus (\ref{gptq}) gives
\begin{equation}\label{eq_or}
\Theta(f)(PTQ) = \sum_kc_kPM_k\gl_{s_k}Q,
\end{equation}
the summation being over those  $k$ for which $M_k=M_{\overline{\pi'_{in}}}$.

Now $f\in J$ since $1\le n\le s_{\pi'}$; thus, by Lemma \ref{l}, $\Theta(f)(PTQ)=0$ 
and therefore the sum (\ref{eq_or}) must vanish. 
But the monomials $PM_k\gl_{s_k}Q$ are linearly independent (they were chosen from an algebraic  basis) 
and so each term must vanish.

Thus, for all $\pi'_{ij}$ with $E_{\pi'}\neq \{0\}$, $1\le i\le d_{\pi'}$ and  $1\le j\le s_{\pi'}$, all 
terms of the form $c_kPM_{\overline{\pi'_{ij}}}\gl_{s_k}Q$ must vanish 
in the sum (\ref{ptq}).
Therefore in this sum  the only nonzero 
terms 
remaining are of the form $c_kPM_k\gl_{s_k}Q$
where $M_k=M_{\overline{\pi_{ij}}}$ for some $\pi_{ij}$ with $E_{\pi}\neq \{0\}$ and $s_\pi<j\le d_\pi$   
or for some $\pi_{ij}$ with $E_{\pi}= \{0\}$. 
By Proposition \ref{41}, these are in $\Bim(J^\bot)$, hence $PTQ\in\Bim(J^\bot)$ 
as required.
 \end{proof}


\section{Jointly Harmonic Operators}\label{s_jho}

In this section $G$ is a locally compact group.
If $\mu$ is a probability measure on $G$, let $P_\mu$ be the map on $L^\infty(G)$ given by
$$(P_\mu\phi)(s) = \int_G\phi(st)d\mu(t) \, .$$
A function $\phi$ is called {\em $\mu$-harmonic} \cite{who,furst} if it satisfies the relation 
$$P_\mu\phi=\phi.$$ 

More generally, given a set $\Lambda\subseteq M(G)$ (not necessarily consisting of probability measures)
we define the set $\cl H(\Lambda)$ of {\em jointly $\Lambda$-harmonic} functions by letting 
$$\cl H(\Lambda):= \left\{\phi\in L^\infty(G): P_\mu\phi=\phi \text{ for all } \mu\in\Lambda\right\}.$$
Note that $\cl H(\Lambda)$ is a weak-* closed linear subspace of $L^\infty(G)$. 
The preannihilator of $\cl H(\Lambda)$ in $L^1(G)$ is 
 $$J_{\Lambda} : =\overline{\sspp\{f\ast\mu -f: f\in L^1(G), \mu \in \Lambda \}}$$
\cite[page 8]{cl}.
Since $\cl H(\Lambda)$ is  invariant under left translations, the space 
 $J_{\Lambda} $ is a left ideal in  $L^1(G)$.

The map $\Theta(\mu)$ extends $P_\mu$ (under the natural identification of $L^{\infty}(G)$  with $\cl D_G$): for every $\phi\in L^\infty(G)$ and any $\mu\in M(G)$, we have  
$$ \Theta(\mu)(M_\phi) = M_{P_\mu\phi}\, $$
and so $\phi\in\cl H(\Lambda)$ if and only if $ \Theta(\mu)(M_\phi) = M_\phi$ for all $\mu\in\Lambda$.
It is therefore natural to define the set $\widetilde{\cl H}(\Lambda)$ of all {\em jointly $\Lambda$-harmonic operators} 
by letting 
$$\widetilde{\cl H}(\Lambda):=\{T\in\cl B(L^2(G)): \Theta(\mu)(T)=T \text{ for all } \mu\in\Lambda\}.$$
This weak-* closed subspace of $\cl B(L^2(G))$ is a 
$\vn(G)$-bimodule (because $\Theta(\mu)$ is a $\vn(G)$-bimodule map for every $\mu$) and it contains
$\{M_a: a\in\cl H(\Lambda)\}$; 
hence it contains $\Bim(  \cl H(\Lambda))$. 

\begin{theorem} \label{71} 
If $\Lambda\subseteq M(G)$ then
 $$\widetilde{\cl H}(\Lambda) = (\Ran J_{\Lambda})^{\perp} \, .$$
 \end{theorem}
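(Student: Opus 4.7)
My plan is to prove both inclusions via Lemma \ref{l}, which identifies $(\Ran J_\Lambda)^{\perp}$ with $\ker \Theta(J_\Lambda)$. The algebraic engine is the fact that $\Theta$ is an algebra homomorphism on $M(G)$, so $\Theta(\mu * \nu) = \Theta(\mu)\Theta(\nu)$. In particular, for $f \in L^1(G)$ and $\mu \in \Lambda$,
\[
\Theta(f * \mu - f)(T) = \Theta(f)\bigl(\Theta(\mu)(T) - T\bigr),
\]
and this single identity drives both directions.

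For the inclusion $\widetilde{\cl H}(\Lambda) \subseteq (\Ran J_\Lambda)^{\perp}$: if $T$ is jointly $\Lambda$-harmonic, then $\Theta(\mu)(T) = T$ for every $\mu \in \Lambda$, so the displayed identity shows $\Theta(f*\mu - f)(T)=0$ for every $f \in L^1(G)$ and $\mu \in \Lambda$. Because each $\Theta(g)$ is norm-continuous in $g \in L^1(G) \subseteq M(G)$ (as $\Ad \rho_r$ is an isometry of $\cl B(L^2(G))$), $T$ annihilates every generator of $J_\Lambda$ under $\Theta$, and hence all of $J_\Lambda$; thus $T \in \ker \Theta(J_\Lambda)$.

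For the reverse inclusion: if $T \in \ker \Theta(J_\Lambda)$, the displayed identity gives $\Theta(f)\bigl(\Theta(\mu)(T) - T\bigr) = 0$ for every $f \in L^1(G)$ and $\mu \in \Lambda$. Applying this with $(f_i)$ a bounded approximate unit for $L^1(G)$ and setting $S = \Theta(\mu)(T) - T$, I will conclude $S = 0$ from the claim that $\Theta(f_i)(S) \to S$ weak-$*$ for every $S \in \cl B(L^2(G))$; this then says $T \in \widetilde{\cl H}(\Lambda)$.

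The only real technical point is this approximate-unit claim, which I regard as the main (but still routine) obstacle. By predual duality, it reduces to showing $\theta(f_i)(h) \to h$ in the trace norm of $\cl T(G)$ for every $h \in \cl T(G)$. Since $\theta(f_i)(h) = \int_G f_i(r)\,\theta_r(h)\,dr$, this follows from the norm continuity at $r = e$ of the orbit map $r \mapsto \theta_r(h)$ together with the concentration of $(f_i)$ near the identity; the continuity in turn comes from formula \eqref{eq_htr} and the joint strong continuity of left/right translation on $L^2(G)$. Once this standard fact is in hand, the proof of Theorem \ref{71} is complete.
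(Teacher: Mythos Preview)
Your proof is correct and follows essentially the same route as the paper: both use Lemma~\ref{l} to rewrite $(\Ran J_\Lambda)^\perp$ as $\ker\Theta(J_\Lambda)$, both exploit the factorisation $\Theta(f\ast\mu-f)=\Theta(f)\bigl(\Theta(\mu)-\id\bigr)$, and both conclude via non-degeneracy of $\Theta|_{L^1(G)}$. The only difference is that the paper invokes non-degeneracy in one line (noting that $\Theta$ is the integrated form of the bounded group representation $\Ad\rho$), whereas you spell it out by proving $\theta(f_i)h\to h$ in trace norm from the continuity of $r\mapsto\theta_r(h)$; this is just an explicit unpacking of the same fact.
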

 
\begin{proof}
Recall that $\Ran J_{\Lambda}$ is the closed linear span of $\theta(u)h$ where $u\in J_\Lambda$ 
 and $h \in \cl T(G)$. If $u=f\ast\mu-f$ where  $f \in L^{1}(G)$, $\mu\in\Lambda$ and
 $T \in \cl B(L^2(G))$ then
  $$\langle T, \theta(u)h\rangle_t =\langle \Theta(f) \Theta(\mu-\delta_e)T, h \rangle_t.$$
By Lemma \ref{l}, 
$T\in (\Ran J_{\Lambda})^{\perp}$ if and only if
\begin{equation}\label{eq_mudel}
\Theta(f) \Theta(\mu-\delta_e)T=0, \ \ \ f \in L^1(G), \ \mu \in \Lambda.
\end{equation}
Since $\Theta$ is the integral of a 
bounded representation of $G$, namely $\mathop{\rm Ad}\rho$, it is 
a non-degenerate representation of $L^1(G)$.
Thus, (\ref{eq_mudel}) holds true of and only if
$$\Theta(\mu-\delta_e)T=0 \; \text{ for all }\; \mu \in \Lambda$$
i.e. if and only if $T \in \widetilde{\cl H}(\Lambda).$
\end{proof}

Theorems \ref{71} and \ref{th_mainc} and Propositions \ref{p_disrc} and \ref{p_abel} imply the following corollary.

\begin{corollary}\label{c_ifh}
Let $G$ be a locally compact group such that 
$(\Ran J_{\Lambda})^{\perp}=\Bim( J_{\Lambda}^\perp)$ for every closed left ideal $J$ of $L^1(G)$. 
Then 
\begin{equation}\label{eq_widet}
\widetilde{\cl H}(\Lambda) = \Bim(  \cl H(\Lambda)).
\end{equation}
In particular, (\ref{eq_widet}) holds true if $G$ is abelian, or weakly amenable discrete, or compact.
\end{corollary}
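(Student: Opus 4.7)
The plan is to chain together three ingredients that have already been established in the paper. First, by Theorem~\ref{71}, we know $\widetilde{\cl H}(\Lambda) = (\Ran J_{\Lambda})^{\perp}$. Second, by the standing hypothesis on $G$, applied to the specific left ideal $J_{\Lambda}\subseteq L^1(G)$, we have $(\Ran J_{\Lambda})^{\perp} = \Bim(J_{\Lambda}^{\perp})$. So it remains only to identify $J_{\Lambda}^{\perp}$ with $\cl H(\Lambda)$ inside $L^{\infty}(G)$.

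For that identification, I would invoke the bipolar theorem: $\cl H(\Lambda)$ is a weak-$*$ closed subspace of $L^{\infty}(G)$ (as noted in the text just before defining $J_{\Lambda}$), and the text states that $J_{\Lambda}$ is precisely the preannihilator of $\cl H(\Lambda)$ in $L^1(G)$ (citing \cite[page 8]{cl}). By the duality $L^1(G)^* = L^{\infty}(G)$, the annihilator of the preannihilator of a weak-$*$ closed subspace is the subspace itself, so $J_{\Lambda}^{\perp} = \cl H(\Lambda)$. Assembling the three equalities gives
\[
\widetilde{\cl H}(\Lambda) \;=\; (\Ran J_{\Lambda})^{\perp} \;=\; \Bim(J_{\Lambda}^{\perp}) \;=\; \Bim(\cl H(\Lambda)),
\]
which is (\ref{eq_widet}).

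For the ``in particular'' statement, I would simply verify that the three classes listed satisfy the hypothesis of the corollary. When $G$ is locally compact abelian, this is exactly Proposition~\ref{p_abel}; when $G$ is weakly amenable discrete, it is Proposition~\ref{p_disrc}; and when $G$ is compact, it is Theorem~\ref{th_mainc}. No additional work is required.

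There is essentially no obstacle here: the corollary is a direct packaging of Theorem~\ref{71} with the main equality $(\Ran J)^{\perp}=\Bim(J^{\perp})$ established separately in Sections~\ref{s_ac}, \ref{s_dc}, and \ref{s_cc}. The only slightly subtle point is to make explicit that $J_{\Lambda}^{\perp} = \cl H(\Lambda)$, which rests on the weak-$*$ closedness of $\cl H(\Lambda)$ (immediate since each $P_\mu$ is weak-$*$ continuous on $L^{\infty}(G)$) together with the standard Hahn--Banach duality.
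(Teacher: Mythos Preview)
Your proposal is correct and matches the paper's own argument: the corollary is stated there as an immediate consequence of Theorem~\ref{71} together with Theorem~\ref{th_mainc} and Propositions~\ref{p_disrc} and~\ref{p_abel}, and the identification $J_{\Lambda}^{\perp}=\cl H(\Lambda)$ is implicit from the text's observation that $J_{\Lambda}$ is the preannihilator of the weak-$*$ closed space $\cl H(\Lambda)$. You have simply made that bipolar step explicit, which is fine.
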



\section{The non-commutative Poisson boundary}

In this section, we discuss the case where $\Lambda$ is a singleton consisting of a probability measure, say $\mu$.
There exists a norm one projection $\widetilde{\cl E}$ on $\widetilde{\cl H}(\mu) := \widetilde{\cl H}(\Lambda)$
given by a pointwise-weak* limit of convex combinations of iterates of $\Theta(\mu)$. 
The noncommutative Poisson boundary of $\mu$, denoted by $\widetilde{\cl H}_\mu$, is defined  to be the space 
$\widetilde{\cl H}(\mu)$, equipped with the unique von Neumann algebra structure 
defined through the Choi-Effros product $\diamond$  given by $T\diamond S= \widetilde{\cl E}(TS)$ \cite{izadv}. 
The space $\cl H(\mu) := \cl H(\Lambda)$ is closed under $\diamond$ and therefore  
is a von Neumann subalgebra of $\widetilde{\cl H}_{\mu}$ denoted by $\cl H_{\mu}$. 

Thus $\widetilde{\cl H}(\mu)$ is an injective weak* closed operator system, 
and in fact so is its subspace $\cl H(\mu) $
(it is the range of a contractive projection from $\cl D$). 
Moreover, $\cl H(\mu)$ admits a natural action $\ga$ of $G$ 
by weak-* continuous unital completely positive isometries, given by the restriction of the action of $G$
on $L^\infty(G)$ by left translation:  $(\ga_s\phi)(t)=\phi(s^{-1} t)$ (the space  $\cl H(\mu)$ is invariant under
translation because $P_\mu$ commutes with each $\ga_s$).

We wish to show that the operator system $\widetilde{\cl H}(\mu)$ is isomorphic, as a dual operator system,
to the operator system crossed product $G\rtimes_\ga \cl H(\mu)$, which we now define:   

Let $\cl M$ be a dual operator system, and let $s\to\ga_s$ be an action of $G$ on $\cl M$ by  
weak-* continuous unital completely positive isometries. The action 
is encoded by the map 
\begin{align*}
\tilde\ga: \cl M\to L^\infty(G,\cl M): v\to (\ga\an_s(v))_{s\in G} \, , 
\end{align*}
which is a unital completely positive isometry. 
Let $\cl B:=\cl B(L^2(G))$ and identify $L^\infty(G,\cl M)$ with $L^\infty(G)\bar\ot \cl M\subseteq \cl B\bar\ot\cl M$.
We also have a map $$G\to \cl B\bar\ot\cl M : s\to\tilde\gl_s:=\gl_s\ot I\, .$$ 
\begin{definition}
The {\em crossed product} $G\rtimes_\ga\cl M$ is defined to be the subspace of 
$\cl B\bar\ot\cl M$     
generated by $\tilde\ga(\cl M)\cdot\tilde\gl(G)$:  it is the weak* closed space 
$$
G\rtimes_\ga\cl M:= \overline{\sspp\{\tilde\ga(v)\tilde\gl_s, \; v \in \cl M, s\in G\}}^{w*}
\subseteq \cl B\bar\ot\cl M.
$$
\end{definition}
\begin{remark}
The crossed product is independent of the representation of $\cl M$ as a weak*-closed operator subsystem of
some $\cl B(H)$. This is a general fact (see \cite{akt3}).  However in case $\cl M$ is additionally an 
{\em injective} operator system (as in the case $\cl{M=H}(\mu)$ considered here), 
it follows from the 
well known corresponding result for von Neumann algebra crossed products \cite[Theorem X.1.7]{tak}. 
This is because $\cl M$
admits a unique von Neumann algebra structure, $\cl N$ say, induced by the Choi-Effros product 
and its original operator space structure. Then $G\rtimes_\ga\cl M$ is isomorphic, as a dual operator system,
to the von Neumann algebra crossed product  $G\rtimes\cl N$, which does not depend on the representation
of $\cl N$ on Hilbert space.
\end{remark}

Let $V\in\cl B\bar\otimes\cl B$ be the \emph{fundamental unitary},  given by 
\begin{align*}  
\quad (V\xi)(s,t) = \xi(st,t)\Delta(t)^{1/2}, \quad\xi\in L^2(G)\otimes L^2(G),
\end{align*}
and define \vspace{-1ex}
\begin{align*}
 \tilde\gG : \cl B\to\cl B\bar\otimes\cl B \quad 
\text{ by }\quad\tilde\gG (T) :=V(T\otimes I)V^*.  \vspace{-1ex}
\end{align*} 
Note that  $\tilde\gG$ is clearly a normal *-homomorphism and an isometry, hence a normal 
unital completely positive map.

\begin{proposition}\label{p_oneinc}
We have that $\tilde\gG\left(\Bim(\cl H(\mu))\right) = G\rtimes_{\ga}\cl H(\mu)$.
In particular, 
\begin{equation}\label{eq_incmuu}
G\rtimes_{\ga}\cl H(\mu)\subseteq\tilde\gG(\widetilde{\cl H}(\mu)).
\end{equation}
\end{proposition}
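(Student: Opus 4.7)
The plan is to verify the equality $\tilde\gG(\Bim(\cl H(\mu)))=G\rtimes_\ga\cl H(\mu)$ by direct computation on a natural set of generators, and then to deduce the second inclusion from the fact, established in Section \ref{s_jho}, that $\Bim(\cl H(\mu))\subseteq\widetilde{\cl H}(\mu)$.

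First, I compute $\tilde\gG$ on the two kinds of ``atoms''. For $\phi\in L^\infty(G)$ and $\xi\in L^2(G\times G)$ a direct calculation with the formula for $V$ and $V^*$ (using that $V^*\eta(s,t)=\gD(t)^{-1/2}\eta(st^{-1},t)$) gives
\[\tilde\gG(M_\phi)\xi(s,t)=\phi(st)\xi(s,t)\quad\text{and}\quad \tilde\gG(\gl_r)\xi(s,t)=\xi(r^{-1}s,t).\]
Identifying $L^\infty(G)\bar\ot L^\infty(G)$ with $L^\infty(G\times G)$ acting by multiplication, the first identity reads $\tilde\gG(M_\phi)=\tilde\ga(\phi)$, since $\tilde\ga(\phi)$ is by definition the function $(s,t)\mapsto \ga_s^{-1}(\phi)(t)=\phi(st)$. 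The second identity reads $\tilde\gG(\gl_r)=\tilde\gl_r$. Thus $\tilde\gG$ intertwines the ``left regular'' and ``multiplication'' parts of $\cl B(L^2(G))$ with those of $\cl B\bar\ot\cl B$ coming from the crossed product picture.

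Next I use the covariance relation $\tilde\gl_r\tilde\ga(\psi)=\tilde\ga(\ga_r\psi)\tilde\gl_r$, which follows directly from the definition of $\tilde\ga$ and is in any case the well-known compatibility. Since the action $\ga$ commutes with $P_\mu$ (an immediate check), $\cl H(\mu)$ is $\ga$-invariant, so $\ga_r\psi\in\cl H(\mu)$ whenever $\psi\in\cl H(\mu)$. Hence, for $\psi\in\cl H(\mu)$ and $s,t\in G$,
\[
\tilde\gG(\gl_s M_\psi\gl_t)=\tilde\gl_s\tilde\ga(\psi)\tilde\gl_t=\tilde\ga(\ga_s\psi)\tilde\gl_{st}\in G\rtimes_\ga\cl H(\mu),
\]
while conversely every generator $\tilde\ga(\psi)\tilde\gl_s$ of $G\rtimes_\ga\cl H(\mu)$ equals $\tilde\gG(M_\psi\gl_s)$ and so lies in $\tilde\gG(\Bim(\cl H(\mu)))$. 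Thus $\tilde\gG$ maps a weak-* dense subset of $\Bim(\cl H(\mu))$ onto a weak-* dense subset of $G\rtimes_\ga\cl H(\mu)$.

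Finally, $\tilde\gG$ is a normal, injective, unital $\ast$-homomorphism, hence a weak-* homeomorphism onto its (weak-* closed) image; in particular it sends weak-* closures to weak-* closures. Combining this with the previous step yields the equality $\tilde\gG(\Bim(\cl H(\mu)))=G\rtimes_\ga\cl H(\mu)$. The inclusion \eqref{eq_incmuu} then follows immediately, since $\Bim(\cl H(\mu))\subseteq\widetilde{\cl H}(\mu)$ was observed in Section \ref{s_jho}. The main subtlety in this plan is bookkeeping with $V$, $V^*$ and $\gD$ in the initial computation of $\tilde\gG$ on $M_\phi$ and $\gl_r$; once these two identities are in hand, the rest is essentially the definition of the crossed product together with weak-* continuity.
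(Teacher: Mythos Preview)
Your proposal is correct and follows essentially the same route as the paper: both arguments establish the key identities $\tilde\gG(M_\phi)=\tilde\ga(\phi)$ and $\tilde\gG(\gl_r)=\tilde\gl_r$ by direct calculation with $V$, then pass to weak-* closed spans using that $\tilde\gG$ is a normal injective $*$-homomorphism, and finally invoke $\Bim(\cl H(\mu))\subseteq\widetilde{\cl H}(\mu)$ from Section~\ref{s_jho}. The only cosmetic difference is that the paper verifies $(\tilde\ga(\phi))V=V(M_\phi\ot I)$ directly rather than computing $V(M_\phi\ot I)V^*$, and it works with generators $M_\phi\gl_r$ rather than $\gl_sM_\psi\gl_t$; your use of the covariance relation to handle the two-sided generators is a harmless extra step, since $\gl_sM_\psi\gl_t=M_{\ga_s\psi}\gl_{st}$ already in $\cl B(L^2(G))$.
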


\begin{proof}
It is well-known (and not hard to verify) that 
$V(\gl_r\otimes I)= (\gl_r\otimes I)V$ for all  $r\in G$
and 
$(I\otimes M_f)V=V(I\otimes M_f)$ for all $f\in L^\infty(G)$.

Thus, $V\in\cl B\bar\ot\cl D$. 
It follows that 
\begin{align*}
\tilde\gG (T) &=V(T\otimes I)V^*\in\cl B\bar\ot\cl D,\quad\text{for all } \; T\in\cl B, \\
\text{and}\quad   
\tilde\gG(\gl_r) &=  \gl_r\otimes I =\tilde\gl_r, \quad\text{for all } \; r\in G\, . 
\end{align*}

If $\phi\in\cl H(\mu)$ and $s\in G$ then the element 
$(\tilde\ga\phi)(s)=\ga\an_s(\phi)$ of $\cl H(\mu)$ acts 
as a multiplication operator on $L^2(G)$ as follows:
$$((\tilde\ga\phi)(s)\eta)(t)=(\ga\an_s(\phi))(t)\eta(t)=\phi(st)\eta(t), \;\; \eta\in L^2(G), t\in G\, .$$ 
We claim that, for every $\phi\in\cl H(\mu)$ and $r\in G$, 
 \begin{equation}\label{eq_gG}
\tilde\gG(M_\phi\gl_r) = \tilde\ga(\phi)\tilde\gl_r\, . 
\end{equation} 
Now $\tilde\gG(M_\phi\gl_r)=\tilde\gG(M_\phi)\tilde\gl_r$ so it suffices to prove that 
$\tilde\gG(M_\phi) = \tilde\ga(\phi)$ or, equivalently, that $(\tilde\ga(\phi))V=V(M_\phi\ot I)$. 
Indeed, for all $\xi,\eta\in L^2(G)$ we have  
\begin{align*}
(\tilde\ga(\phi)V (\xi\ot \eta))(s,t) =(\ga_{s\an}\phi)(t) V (\xi\ot \eta))(s,t)
&=\phi(st) \xi(st)\eta(t)\Delta(t)^{1/2} \\
\text{and}\;\; (V(M_\phi\ot I)(\xi\ot \eta))(s,t) =(V(\phi\xi\ot\eta))(s,t)) &=(\phi\xi)(st)\eta(t)\Delta(t)^{1/2}  
\end{align*}
which proves the claim.

\medskip

By linearity and w*-continuity, 
\begin{align*}\tilde\gG\left(\overline{\sspp\{M_\phi\gl_r, \; \phi\in \cl H(\mu), r\in G\}}^{w*}\!\right) &=   \overline{\sspp\{\tilde\ga(\phi)\tilde\gl_r, \; \phi\in \cl H(\mu), r\in G\}}^{w*},\\
\text{i.e. }\quad
\tilde\gG\left(\Bim(\cl H(\mu))\right) &= G\rtimes_{\ga}\cl H(\mu)\, .
\end{align*} 

Since $\Bim(\cl H(\mu))\subseteq\widetilde{\cl H}(\mu)$, we have in particular 
$G\rtimes_{\ga}\cl H(\mu)\subseteq\tilde\gG(\widetilde{\cl H}(\mu))$.
\end{proof}

In case $G$ is weakly amenable discrete, 
compact or abelian, by Corollary \ref{c_ifh} we know that  
$\Bim(\cl H(\mu))=\widetilde{\cl H}(\mu)$. Therefore the previous Proposition yields:  

\begin{proposition}\label{crosssystems}
Assume that $G$ is weakly amenable discrete, compact or abelian.
Then $\tilde\Gamma$ is an isomorphism of dual operator spaces between 
$\widetilde{\cl H}(\mu)$ and the crossed product $G\rtimes_{\ga}\cl H(\mu)$. 
\end{proposition}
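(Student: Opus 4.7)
The plan is to combine Proposition \ref{p_oneinc} with Corollary \ref{c_ifh} and then verify that the resulting bijection is an isomorphism of dual operator spaces.

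First, under any of the three hypotheses on $G$, Corollary \ref{c_ifh} provides the key identification $\widetilde{\cl H}(\mu) = \Bim(\cl H(\mu))$. Feeding this into the first equality of Proposition \ref{p_oneinc} yields
\[
\tilde\gG(\widetilde{\cl H}(\mu)) \;=\; \tilde\gG(\Bim(\cl H(\mu))) \;=\; G\rtimes_{\ga}\cl H(\mu),
\]
which gives both that $\tilde\gG$ maps $\widetilde{\cl H}(\mu)$ into the crossed product and that the image is all of it. So the only question is whether the restriction of $\tilde\gG$ to $\widetilde{\cl H}(\mu)$ is a dual operator space isomorphism onto $G\rtimes_{\ga}\cl H(\mu)$.

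For this I would appeal to the fact that $V\in\cl B\bar\ot\cl B$ is a unitary, so the map $T\mapsto V(T\ot I)V^*$ on $\cl B$ is a normal, unital, injective $*$-homomorphism into $\cl B\bar\ot\cl B$; consequently it is a weak-* homeomorphism onto its range and a complete isometry, and its inverse on the range is also weak-* continuous and completely isometric. Restricting to the weak-* closed operator subsystem $\widetilde{\cl H}(\mu)$, we therefore obtain a completely isometric, weak-* bicontinuous, unital, completely positive bijection onto $G\rtimes_{\ga}\cl H(\mu)$. This is exactly what is required for an isomorphism of dual operator spaces (in fact, of dual operator systems).

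There isn't really a hard step here, since everything follows by assembling pieces already in place; the one spot to be a little careful is to justify that the inverse of $\tilde\gG$ on its image is weak-* continuous, but this is automatic from $\tilde\gG$ being a spatial $*$-isomorphism implemented by a unitary. The proof is thus essentially a one-line deduction from the preceding two results plus a remark on the bicontinuity of conjugation by $V$.
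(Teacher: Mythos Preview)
Your proposal is correct and follows essentially the same approach as the paper: combine Corollary \ref{c_ifh} with Proposition \ref{p_oneinc} to obtain $\tilde\gG(\widetilde{\cl H}(\mu))=G\rtimes_\ga\cl H(\mu)$, and then note that $\tilde\gG$, being conjugation by a unitary, is a normal injective $*$-homomorphism and hence a weak-$*$ bicontinuous complete isometry onto its range. The paper's argument is in fact the terse one-line deduction you describe; your additional remarks on why the inverse is weak-$*$ continuous are a welcome clarification but not a departure from the paper's method.
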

 
\begin{corollary}
Assume that $G$ is weakly amenable discrete, compact or abelian. Then the crossed product $G\rtimes_{\ga}\cl H(\mu)$ 
is an injective operator system.  
\end{corollary}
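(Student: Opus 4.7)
The plan is to derive the corollary directly from Proposition \ref{crosssystems} together with the observation, already recorded at the beginning of Section 7, that $\widetilde{\cl H}(\mu)$ is an injective dual operator system. The key point is that the isomorphism $\tilde\gG$ of Proposition \ref{crosssystems}, while stated there only as a dual operator space isomorphism, is actually given by conjugation by the fundamental unitary $V$, so it automatically transports the operator system structure (and hence injectivity) as well.

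More precisely, first I would note that $\tilde\gG : \cl B \to \cl B \bar\otimes \cl B$, $T \mapsto V(T\otimes I)V^*$, is a normal unital $*$-homomorphism on all of $\cl B(L^2(G))$ because $V$ is unitary; in particular $\tilde\gG$ is a unital complete order embedding. Restricting to $\widetilde{\cl H}(\mu)$, Proposition \ref{crosssystems} (which applies since $G$ is assumed weakly amenable discrete, compact, or abelian) identifies the image with $G \rtimes_{\ga} \cl H(\mu)$. Thus $\tilde\gG$ implements a unital complete order isomorphism of dual operator systems between $\widetilde{\cl H}(\mu)$ and $G \rtimes_{\ga} \cl H(\mu)$.

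Second, I would recall that $\widetilde{\cl H}(\mu)$ is injective: by the discussion in Section 7, $\widetilde{\cl E}$ is a norm one projection from $\cl B(L^2(G))$ onto $\widetilde{\cl H}(\mu)$, obtained as a pointwise weak-$*$ limit of convex combinations of iterates of the unital completely positive map $\Theta(\mu)$; hence $\widetilde{\cl E}$ is itself unital completely positive, exhibiting $\widetilde{\cl H}(\mu)$ as the image of a UCP projection on the injective operator system $\cl B(L^2(G))$. Since injectivity of operator systems is preserved under unital complete order isomorphism, it transfers to $G\rtimes_{\ga}\cl H(\mu)$, yielding the corollary.

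There is essentially no obstacle here beyond the observation that $\tilde\gG$ does more than is claimed in Proposition \ref{crosssystems} -- it is an operator system isomorphism, not merely a dual operator space isomorphism -- and this is immediate from the formula $\tilde\gG(T) = V(T\otimes I)V^*$ with $V$ unitary.
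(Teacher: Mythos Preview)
Your proposal is correct and follows the same approach as the paper: the corollary is stated without proof there, being immediate from Proposition~\ref{crosssystems} together with the injectivity of $\widetilde{\cl H}(\mu)$ recorded at the start of the section, and your argument simply makes explicit that $\tilde\gG$, being given by unitary conjugation, is a unital complete order isomorphism (the paper already notes $\tilde\gG$ is a normal $*$-homomorphism just before Proposition~\ref{p_oneinc}), so injectivity transfers.
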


For $G$ weakly amenable discrete, compact or abelian  we obtain the following Corollary, established by Izumi in  \cite{izu} for discrete groups, by Jaworski and Neufang 
 for locally compact groups \cite{jn} and by Kalantar, Neufang and Ruan for locally compact quantum groups 
\cite{knr}. Analogous results were obtained in  \cite{sask} for  complex contractive measures.

Using these results, together with Theorem \ref{71} we obtain, for any locally compact group $G$, the equality  
$(\Ran J_\Lambda)^\perp = \Bim(J_\Lambda^\perp)$ when $\Lambda=\{\mu\}$ and $\mu$ is a probability measure.
 
\begin{corollary}
Assume that $G$ is weakly amenable discrete, compact or abelian. Let $\mu$ be a probability measure on $G$. 
The noncommutative Poisson boundary $\widetilde{\cl H}_{\mu} $ is 
*-isomorphic to the crossed product of $G\rtimes_{\ga}\cl H_{\mu}$. 
\end{corollary}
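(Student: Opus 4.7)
The plan is to promote the dual operator system isomorphism obtained in Proposition~\ref{crosssystems} to a $*$-isomorphism by invoking the uniqueness of the von Neumann algebra structure on an injective operator system.

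First, I would identify the operator system crossed product $G\rtimes_\alpha \cl H(\mu)$ with the standard von Neumann algebra crossed product. The space $\cl H(\mu)$ is the range of a normal contractive projection from $\cl D\cong L^\infty(G)$ (the restriction of $\widetilde{\cl E}$), hence is an injective dual operator system; by the Choi--Effros theorem it admits a unique von Neumann algebra structure compatible with its dual operator system structure, and this structure is precisely $\cl H_\mu$. The action $\alpha$ of $G$ by left translations restricts from a weak-$*$ continuous action by $*$-automorphisms of $L^\infty(G)$, so by uniqueness it acts by normal $*$-automorphisms of $\cl H_\mu$, and the von Neumann algebra crossed product $G\rtimes \cl H_\mu$ is well defined. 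The Remark following the definition of the operator system crossed product (together with Takesaki X.1.7) then gives $G\rtimes_\alpha \cl H(\mu) \cong G\rtimes \cl H_\mu$ as dual operator systems.

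Second, I would apply Proposition~\ref{crosssystems}. Under the standing hypothesis on $G$, the map $\tilde\Gamma$ is a dual operator space isomorphism of $\widetilde{\cl H}(\mu)$ onto $G\rtimes_\alpha \cl H(\mu)$, and since $\tilde\Gamma$ is the restriction of a normal unital $*$-homomorphism on $\cl B(L^2(G))$, it is in particular a (weak-$*$ continuous) unital complete order isomorphism, i.e.\ a dual operator system isomorphism. Composing with the identification from the previous step yields a dual operator system isomorphism
\[
\Phi := \tilde\Gamma : \widetilde{\cl H}(\mu) \;\longrightarrow\; G\rtimes \cl H_\mu,
\]
where the right-hand side is a genuine von Neumann algebra with its ordinary operator product.

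Third, I would invoke uniqueness to conclude. The space $\widetilde{\cl H}(\mu)$ is itself injective, being the range of the contractive projection $\widetilde{\cl E}$ on $\cl B(L^2(G))$. By Choi--Effros, it admits at most one von Neumann algebra structure compatible with its dual operator system structure, and this is exactly $\widetilde{\cl H}_\mu$ (the Choi--Effros product $T\diamond S = \widetilde{\cl E}(TS)$). Pulling back the product of $G\rtimes \cl H_\mu$ via $\Phi$ gives another such structure on $\widetilde{\cl H}(\mu)$, so by uniqueness $\Phi$ must intertwine $\diamond$ with the operator product, i.e.\ $\Phi$ is a $*$-isomorphism of $\widetilde{\cl H}_\mu$ onto $G\rtimes \cl H_\mu$.

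The principal obstacle is this last step: the abstract appeal to uniqueness of the Choi--Effros von Neumann algebra structure on an injective dual operator system. If one prefers an explicit verification, the alternative is to check on the generating elements $M_\phi \lambda_r$ (for $\phi\in \cl H(\mu)$, $r\in G$) that $\tilde\Gamma$ carries the Choi--Effros product to the operator product on $G\rtimes \cl H_\mu$, using the identity $\tilde\Gamma(M_\phi \lambda_r) = \tilde\alpha(\phi)\tilde\lambda_r$ from the proof of Proposition~\ref{p_oneinc} and matching $\widetilde{\cl E}$ with the canonical normal conditional expectation from $\cl B(L^2(G))\bar\otimes \cl H_\mu$ onto $G\rtimes \cl H_\mu$.
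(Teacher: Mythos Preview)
Your proposal is correct and follows essentially the same route as the paper. The paper also first notes that $\alpha$ acts by $*$-automorphisms on $\cl H_\mu$ (verifying $\alpha_s(\phi\diamond\psi)=\alpha_s(\phi)\diamond\alpha_s(\psi)$ directly from the definition of $\diamond$, rather than via uniqueness), and then, instead of your abstract appeal to uniqueness of the Choi--Effros structure, simply observes that $\tilde\Gamma$ is a completely positive surjective isometry between von Neumann algebras and cites \cite[Corollary 5.2.3]{er} to conclude it is a $*$-isomorphism---which is of course the concrete form of the uniqueness principle you invoke.
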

\begin{proof}
It follows from the definition of the von Neumann algebra structure on $\cl H _{\mu}$ 
that $\ga_s(\phi\diamond\psi)=\ga_s(\phi)\diamond\ga_s(\psi)$ for $\phi,\psi\in\cl H _{\mu}$.
Thus $G$ acts  
on the von Neumann algebra $\cl H_{\mu}$ by weak-* continuous *-automorphisms.
The Corollary now follows  from Proposition \ref{crosssystems}  and the fact that $\tilde\Gamma$ induces a 
completely positive surjective isometry  between von Neumann algebras, which must therefore by 
a *-homomorphism \cite[Corollary 5.2.3]{er}.
\end{proof}

\end{document}